\numberwithin{equation}{section}
\numberwithin{figure}{section}
\theoremstyle{plain}
\newtheorem{thm}{\protect\theoremname}[section]
  \theoremstyle{definition}
  \newtheorem{example}[thm]{\protect\examplename}
  \theoremstyle{definition}
  \newtheorem{condition}[thm]{\protect\conditionname}
  \theoremstyle{definition}
  \newtheorem{defn}[thm]{\protect\definitionname}
 \theoremstyle{definition}
 \newtheorem*{defn*}{\protect\definitionname}
  \theoremstyle{plain}
  \newtheorem*{assumption*}{\protect\assumptionname}
  \theoremstyle{remark}
  \newtheorem*{rem*}{\protect\remarkname}
  \theoremstyle{remark}
  \theoremstyle{plain}
  \newtheorem{prop}[thm]{\protect\propositionname}
  \theoremstyle{plain}
  \newtheorem{lem}[thm]{\protect\lemmaname}
  \theoremstyle{plain}
  \newtheorem{cor}[thm]{\protect\corollaryname}
  \theoremstyle{remark}
  \newtheorem{claim}[thm]{\protect\claimname}
\newcommand{\eps}{\varepsilon}  
\newcommand{\vf}{\varphi}
\newcommand{\al}{\alpha}
\newcommand{\be}{\beta}
\newcommand{\si}{\sigma}
\newcommand{\tht}{\theta}
\newcommand{\om}{\omega}
\newcommand{\vp}{\varpi}
\newcommand{\md}{\mathrm{d}}   
\newcommand{\vd}{\,\md}
\newcommand{\me}{\mathrm{e}}   
\newcommand{\pd}{\partial}   
\newcommand{\esssup}{\mathop{\mathrm{ess}\sup}}
\newcommand{\R}{\mathbf{R}}   
\newcommand{\Nat}{\mathbf{Z}_+}
\newcommand{\cfun}{\mathcal{C}}
\newcommand{\bO}{\mathcal{O}}
\newcommand{\PS}{\Omega}  
\newcommand{\E}{\mathbb{E}}  
\newcommand{\Prob}{\mathbb{P}}  
\newcommand{\BM}{w}  
\newcommand{\m}{\ell} 
\newcommand{\bbr}{|\hspace{-0.3ex}|\hspace{-0.3ex}|}
\newcommand{\lbr}{[\hspace{-0.33ex}[}
\newcommand{\rbr}{]\hspace{-0.33ex}]}
\newcommand{\Dom}{\mathcal{O}}
\newcommand{\hold}{\delta}
  \providecommand{\assumptionname}{Assumption}
  \providecommand{\claimname}{Claim}
  \providecommand{\conditionname}{Condition}
  \providecommand{\corollaryname}{Corollary}
  \providecommand{\definitionname}{Definition}
  \providecommand{\examplename}{Example}
  \providecommand{\lemmaname}{Lemma}
  \providecommand{\propositionname}{Proposition}
  \providecommand{\remarkname}{Remark}
\providecommand{\theoremname}{Theorem}
\begin{document}

\begin{frontmatter}

\title{
Stochastic continuity of random fields governed by a system of stochastic PDEs
}
\runtitle{Stochastic parabolic systems}

\begin{aug}
\author{\fnms{Kai} \snm{Du}
\ead[label=e1]{kdu@fudan.edu.cn}}
\author{\fnms{Jiakun} \snm{Liu}
\ead[label=e2]{jiakunl@uow.edu.au}}
\and
\author{\fnms{Fu} \snm{Zhang}\thanks{F. Zhang was partially supported by the National Natural Science Foundation of China (Grants \#11701369).}
\ead[label=e3]{fugarzhang@163.com}
\ead[label=u1,url]{http://www.foo.com}}

\runauthor{K. Du, J. Liu and F. Zhang}


\address{
	K. Du\\
	Shanghai Center for Mathematical Sciences\\
	Fudan University\\
	Shanghai 200433\\
	China\\
	\printead{e1}
}

\address{
J. Liu\\
School of Mathematics and Applied Statistics\\
University of Wollongong\\
Wollongong NSW 2522\\
Australia\\
\printead{e2}
}

\address{
F. Zhang\\
College of Science\\
University of Shanghai for Science and Technology\\
Shanghai 200093\\
China\\
\printead{e3}
}
\end{aug}

\begin{abstract}
This paper constructs a solvability theory for a system of stochastic
partial differential equations. On account of the Kolmogorov continuity
theorem, solutions are looked for in certain H\"{o}lder-type classes in
which a random field is treated as a space-time function taking values
in $L^{p}$-space of random variables. A modified stochastic parabolicity
condition involving $p$ is proposed to ensure the finiteness of the
associated norm of the solution, which is showed to be sharp by examples.
The Schauder-type estimates and the solvability theorem are proved. \end{abstract}

\begin{keyword}[class=MSC]
\kwd[Primary ]{60H15}
\kwd{35R60}
\kwd[; secondary ]{35K45}
\end{keyword}

\begin{keyword}
\kwd{stochastic partial differential system}
\kwd{stochastic parabolicity condition}
\kwd{Schauder estimate}
\kwd{stochastic continuity}
\end{keyword}

\end{frontmatter}

\section{Introduction}

Random fields governed by systems of stochastic partial differential
equations (SPDEs) have been used to model many physical phenomena in
random environments such as the motion of a random string, stochastic
fluid mechanic, the precessional motion of magnetisation with random
perturbations, and so on; specific models can be founded in \cite{funaki1983random,mueller2002hitting,mikulevicius2004stochastic,hairer2006ergodicity,brzezniak2013weak,da2014stochastic}
and references therein. 
This paper concerns the smoothness properties
of the random field 
\[
{\bm{u}}=(u^{1},\dots,u^{N})':\R^{d}\times[0,\infty)\times\PS\to\R^{N}
\]
described by the following linear system of SPDEs: 
\begin{equation}
\md u^{\alpha}=\big(a_{\alpha\beta}^{ij}\partial_{ij}u^{\beta}+b_{\alpha\beta}^{i}\partial_{i}u^{\beta}+c_{\alpha\beta}u^{\beta}+f_{\alpha}\big)\vd t+\bigl(\sigma_{\alpha\beta}^{ik}\partial_{i}u^{\beta}+\nu_{\alpha\beta}^{k}u^{\beta}+g_{\alpha}^{k}\bigr)\vd\BM_{t}^{k},\label{eq:main}
\end{equation}
where $\{\BM^{k}\}$ are countable independent Wiener processes defined
on a filtered complete probability space $(\Omega,\mathcal{F},(\mathcal{F}_{t})_{t\ge 0},\Prob)$,
and Einstein's summation convention is used with 
\[
i,j=1,2,\dots,d;\text{\quad}\alpha,\beta=1,2,\dots,N;\quad k=1,2,\dots,
\]
and the coefficients and free terms are all random fields. 
Considering infinitely many Wiener processes enables us to treat systems driven by space-time white noise (see~\cite{krylov1999analytic}).
Regularity
theory for system \eqref{eq:main} can not only directly apply to
some concrete models, see for example \cite{zakai1969,funaki1983random,walsh1986introduction,mueller2002hitting},
but also provide with important estimates for solutions of suitable
approximation to nonlinear systems in the literature such as \cite{krylov1997spde,mikulevicius2012unbiased,da2014stochastic}
and references therein.

The literature dedicated to SPDEs (not systems)
is quite extensive and fruitful. In the framework of Sobolev spaces,
a complete $L^{p}$-theory ($p\ge2$) has been developed,
see \cite{pardoux1975these,krylov1977cauchy,krylov1996l_p,krylov1999analytic,van2012stochastic,chen2014lp}
and references therein. 
However,
the $L^{p}$-theory for systems of SPDEs is far from complete, though it has been fully solved for $p=2$ by \cite{Kim2013w},
and for $p>2$ some specific results were obtained by \cite{mikulevicius2001note,mikulevicius2004stochastic,kim2013note},
where the matrices $\sigma^{ik}=[\sigma_{\alpha\beta}^{ik}]_{N\times N}$
were diagonal or nearly diagonal. The smoothness properties of random fields follow
from Sobolev's embedding theorem in this framework.

The present paper investigates the regularity of random fields
from another aspect prompted by Kolmogorov's continuity theorem.
This theorem gives mild conditions under which a random fields has
a continuous modification, and the point is to derive appropriate
estimates on $L^{p}$-moments of increments of the random field. This
boosts an idea that considers a random field to be a function of $(x,t)$
taking values in the space $L_{\omega}^{p}:=L^{p}(\Omega)$ and introduces
appropriate $L_{\omega}^{p}$-valued H\"{o}lder classes as the working
spaces, for instance, the basic space used in \cite{rozovskiui1975stochastic,mikulevicius2000cauchy,du2015cauchy}
and also in the present paper defined to be the set of all jointly
measurable random fields $u$ such that
\[
\|u\|_{\cfun_{p}^{\hold}}:=\bigg[\sup_{t,\,x}\E|u(x,t)|^{p}+\sup_{t,\,x\neq y}\frac{\E|u(x,t)-u(y,t)|^{p}}{|x-y|^{\hold p}}\bigg]^{\frac{1}{p}}<\infty
\]
with some constants $\hold\in(0,1)$ and $p\in[2,\infty)$. Each random
field in this space $\cfun_{p}^{\hold}$ is stochastically continuous
in space, and if $\hold p>d$ it has a modification H\"{o}lder continuous
in space by Kolmogorov's theorem.

For the Cauchy problem for parabolic SPDEs (not systems), a $C^{2+\hold}$-theory was once
an open problem proposed by \cite{krylov1999analytic}; based on the H\"{o}lder class $\cfun_{p}^{\hold}$ it was partially
addressed by \cite{mikulevicius2000cauchy} and generally
solved by \cite{du2015cauchy,du2016schauder} very recently. 
They proved that, \emph{under natural conditions
on the coefficients, the solution $u$ and its derivatives $\partial u$
and $\partial^{2}u$ belong to the class $\mathcal{C}_{p}^{\hold}$
if $f$, $g$ and $\pd g$ belong to this space}; \cite{du2015cauchy}
further obtained the H\"{o}lder continuity in time of $\partial^{2}u$.
The main results of the theory are sharp in that they could not be
improved under the same assumptions. Extensions to the Cauchy\textendash Dirichlet
problem of SPDEs can be found in \cite{mikulevicius2003cauchy,mikulevicius2006cauchy},
and for more related results, we refer the reader to, for
instance, \cite{chow1994stochastic,bally1995approximation,tang2016cauchy}.
Nevertheless, $C^{2+\hold}$-theory for systems of SPDEs is not known
in the literature.

The purpose of this paper is to construct such a $C^{2+\hold}$-theory
for systems of type \eqref{eq:main} under mild conditions. 
Like the situation in the $L^p$ framework this extension is also nontrivial as some new features emerge in the system of SPDEs comparing with single equations. 
It is well-known that the well-posedness of a second order SPDE is usually
guaranteed by certain coercivity conditions. For system \eqref{eq:main},
\cite{Kim2013w} recently obtained $W_{2}^{n}$-solutions under the following algebraic condition:
there is a constant $\kappa>0$ such that
\begin{equation}
\big(2a_{\alpha\beta}^{ij}-\sigma_{\gamma\alpha}^{ik}\sigma_{\gamma\beta}^{jk}\big)\xi_{i}^{\alpha}\xi_{j}^{\beta}\ge\kappa|\xi|^{2}\quad\forall\,\xi\in\R^{d\times N}.\label{eq:parabolic}
\end{equation}
Although it is a natural extension of the strong ellipticity condition for
PDE systems ($\sigma\equiv0$, see for example \cite{schlag1996schauder})
and of the stochastic parabolicity condition for SPDEs ($N=1$, see
for example \cite{krylov1999analytic}), the
following example constructed by \cite{kim2013note} reveals
that condition \eqref{eq:parabolic} is not sufficient to ensure the
finiteness of $L_{\om}^{p}$-norm of the solution of some system even
the given data are smooth, and some structure condition stronger than \eqref{eq:parabolic}
is indispensable to establish a general $L^{p}$ or $C^{2+\hold}$
theory for systems of type \eqref{eq:main}.
\begin{example}
\label{exa:counterexam}Let $d=1$, $N=2$ and $p>2$. Consider the following
system: 
\begin{equation}
\bigg\{\begin{aligned}\md u^{(1)} & =u_{xx}^{(1)}\vd t-\mu u_{x}^{(2)}\vd\BM_{t},\\
\md u^{(2)} & =u_{xx}^{(2)}\vd t+\mu u_{x}^{(1)}\vd\BM_{t}
\end{aligned}
\label{eq:ceg}
\end{equation}
with the initial data 
\[
u^{(1)}(x,0)=\me^{-\frac{x^{2}}{2}},\quad u^{(2)}(x,0)=0,
\]
where $\mu$ is a given constant. In this case, condition \eqref{eq:parabolic}
reads $\mu^{2}<2$, but we will see that this is not sufficient to
ensure the finiteness of $\E|\bm{u}(x,t)|^{p}$ with $p>2$. Set $v=u^{(1)}+\sqrt{-1}u^{(2)}$,
and the above system turns to a single equation: 
\begin{equation}
\md v=v_{xx}\vd t+\sqrt{-1}\mu v_{x}\vd\BM_{t}\label{eq:complex}
\end{equation}
with $v(x,0)=u^{(1)}(x,0)$. It can be verified directly by It\^{o}'s
formula that 
\[
v(x,t)= {1 \over \sqrt{1+(2+\mu^{2})t}} 
\exp \biggl\{ {-\frac{(x+\sqrt{-1}\mu\BM_{t})^{2}}{2[1+(2+\mu^{2})t]}} \biggr\}
\]
solves \eqref{eq:complex} with the given initial condition. So we
can compute 
\begin{align}
\E|\bm{u}(x,t)|^{p} & =\E|v(x,t)|^{p}\label{eq:example-lp}\\
 & =\frac{1}{\sqrt{2\pi t}}\frac{1}{[1+(2+\mu^{2})t]^{p/2}}\,\me^{-\frac{px^{2}}{2[1+(2+\mu^{2})t]}}\int_{\R}\me^{-\frac{y^{2}}{2t}\bigl[1-\frac{p\mu^{2}t}{1+(2+\mu^{2})t}\bigr]}\vd y.\nonumber 
\end{align}
It is noticed that
\[
1-\frac{p\mu^{2}t}{1+(2+\mu^{2})t}\to\frac{2-(p-1)\mu^{2}}{2+\mu^{2}}\quad\text{as}\;t\to\infty,
\]
which implies that if
\begin{equation}
\mu^{2}>\frac{2}{p-1},\label{eq:exam-cond}
\end{equation}
the integral in \eqref{eq:example-lp} diverges for large $t$, and
$\E|\bm{u}(x,t)|^{p}=\infty$ for every $x$. 
\end{example}
A major contribution of this paper is the finding of a general coercivity condition that ensures us to construct a general $C^{2+\hold}$-theory
for system \eqref{eq:main}.
The basic idea is to impose an appropriate correction term involving
$p$ to the left-hand side of \eqref{eq:parabolic}. More specifically,
we introduce
\begin{defn}[MSP condition]
\label{cond:msp}Let $p\in[2,\infty)$. The coefficients $a=(a_{\alpha\beta}^{ij})$ and $\sigma=(\sigma_{\alpha\beta}^{ik})$ are said to satisfy the
{\em modified stochastic parabolicity (MSP) condition} if there are measurable functions
$\lambda_{\alpha\beta}^{ik}:\R^{d}\times[0,\infty)\times\PS\to\R$ with $\lambda_{\alpha\beta}^{ik}=\lambda_{\beta\alpha}^{ik}$,
such that 
\begin{equation}
\mathcal{A}_{\alpha\beta}^{ij}(p,\lambda):=2a_{\alpha\beta}^{ij}-\sigma_{\gamma\alpha}^{ik}\sigma_{\gamma\beta}^{jk}-(p-2)(\sigma_{\gamma\alpha}^{ik}-\lambda_{\gamma\alpha}^{ik})(\sigma_{\gamma\beta}^{jk}-\lambda_{\gamma\beta}^{jk})\label{eq:Aij}
\end{equation}
satisfy the \emph{Legendre\textendash Hadamard condition}: there is
a constant $\kappa>0$ such that
\begin{equation}
\label{eq:lp-condition}
\mathcal{A}_{\alpha\beta}^{ij}(p,\lambda)\,\xi_{i}\xi_{j}\eta^{\alpha}\eta^{\beta}\ge\kappa|\xi|^{2}|\eta|^{2}
\quad\forall\,\xi\in\R^{d},\ \eta\in\R^{N}
\end{equation}
everywhere on $\R^{d}\times[0,\infty)\times\PS$.
\end{defn}

In particular, the following criteria for the MSP condition, simplified by taking $\lambda_{\alpha\beta}^{ik}=0$ and $\lambda_{\alpha\beta}^{ik}=(\sigma_{\alpha\beta}^{ik}+\sigma_{\beta\alpha}^{ik})/2$ 
respectively in \eqref{eq:Aij}, could be
very convenient in applications. 

\begin{lem}\label{lem1}
The MSP condition is satisfied if either 
\begin{enumerate}
\item[{\em (i)}] $2a_{\alpha\beta}^{ij}-(p-1)\sigma_{\gamma\alpha}^{ik}\sigma_{\gamma\beta}^{jk}$
or
\item[{\em (ii)}] $2a_{\alpha\beta}^{ij}-\sigma_{\gamma\alpha}^{ik}\sigma_{\gamma\beta}^{jk}-(p-2)\widehat{\sigma}_{\gamma\alpha}^{ik}\widehat{\sigma}_{\gamma\beta}^{jk}$
with $\widehat{\sigma}_{\alpha\beta}^{ik}:=(\sigma_{\alpha\beta}^{ik}-\sigma_{\beta\alpha}^{ik})/2$
\end{enumerate}
satisfies the Legendre\textendash Hadamard condition. 
\end{lem}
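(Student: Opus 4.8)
The plan is to prove both statements by the same mechanism: in each case I would exhibit an explicit admissible correction tensor $\lambda=(\lambda_{\alpha\beta}^{ik})$ for which the general tensor $\mathcal{A}_{\alpha\beta}^{ij}(p,\lambda)$ of \eqref{eq:Aij} collapses exactly onto the expression hypothesized to satisfy the Legendre--Hadamard condition. Once that algebraic identity is in place, the inequality \eqref{eq:lp-condition} is inherited verbatim from the assumption, so the MSP condition holds. The only points requiring attention are that the chosen $\lambda$ respects the symmetry requirement $\lambda_{\alpha\beta}^{ik}=\lambda_{\beta\alpha}^{ik}$ imposed in Definition~\ref{cond:msp} and that the index contractions over the repeated $\gamma$ and $k$ are tracked correctly.

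For part (i), I would take $\lambda\equiv 0$, which trivially meets the symmetry constraint. Substituting into \eqref{eq:Aij} makes the two quadratic terms in $\sigma$ coalesce, since $-\sigma_{\gamma\alpha}^{ik}\sigma_{\gamma\beta}^{jk}-(p-2)\sigma_{\gamma\alpha}^{ik}\sigma_{\gamma\beta}^{jk}=-(p-1)\sigma_{\gamma\alpha}^{ik}\sigma_{\gamma\beta}^{jk}$. Hence $\mathcal{A}_{\alpha\beta}^{ij}(p,0)=2a_{\alpha\beta}^{ij}-(p-1)\sigma_{\gamma\alpha}^{ik}\sigma_{\gamma\beta}^{jk}$, which is precisely the tensor in (i), and the conclusion follows immediately.

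For part (ii), I would take $\lambda_{\alpha\beta}^{ik}=(\sigma_{\alpha\beta}^{ik}+\sigma_{\beta\alpha}^{ik})/2$, the symmetric part of $\sigma$ in its two $N$-indices; this is manifestly invariant under $\alpha\leftrightarrow\beta$, so it is admissible. With this choice $\sigma_{\gamma\alpha}^{ik}-\lambda_{\gamma\alpha}^{ik}=(\sigma_{\gamma\alpha}^{ik}-\sigma_{\alpha\gamma}^{ik})/2=\widehat{\sigma}_{\gamma\alpha}^{ik}$ is the antisymmetric part, so the correction term in \eqref{eq:Aij} becomes $-(p-2)\widehat{\sigma}_{\gamma\alpha}^{ik}\widehat{\sigma}_{\gamma\beta}^{jk}$ and $\mathcal{A}_{\alpha\beta}^{ij}(p,\lambda)$ reduces exactly to the tensor in (ii).

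The argument is entirely algebraic and presents no analytic obstacle whatsoever; insofar as there is any difficulty, it is the mild bookkeeping of free versus summed indices when forming $\sigma-\lambda$, together with confirming the $\alpha\leftrightarrow\beta$ symmetry of each chosen $\lambda$ before invoking the definition. Both verifications are routine, so I expect the proof to amount to little more than recording these two substitutions.
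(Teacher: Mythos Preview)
Your proposal is correct and matches the paper's own argument exactly: the paper states (just before the lemma) that the two criteria arise by taking $\lambda_{\alpha\beta}^{ik}=0$ and $\lambda_{\alpha\beta}^{ik}=(\sigma_{\alpha\beta}^{ik}+\sigma_{\beta\alpha}^{ik})/2$ respectively in \eqref{eq:Aij}, which is precisely what you do. The index bookkeeping and symmetry checks you outline are accurate.
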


Evidently, the MSP condition is \emph{invariant} under change of basis of $\R^{d}$ or under orthogonal transformation of $\R^{N}$. 
Also the Legendre\textendash Hadamard
condition (see for example \cite{giaquinta1993introduction}) 
is more general than the strong ellipticity condition. The MSP condition
coincides with the Legendre\textendash Hadamard condition for PDE
systems and the stochastic parabolicity condition for SPDEs. Besides when
$p=2$ it becomes
\begin{equation}
\big(2a_{\alpha\beta}^{ij}-\sigma_{\gamma\alpha}^{ik}\sigma_{\gamma\beta}^{jk}\big)\xi_{i}\xi_{j}\eta^{\alpha}\eta^{\beta}\ge\kappa|\xi|^{2}|\eta|^{2}\quad\forall\,\xi\in\R^{d},\ \eta\in\R^{N}\label{eq:p2}
\end{equation}
which is weaker than \eqref{eq:parabolic} used in \cite{Kim2013w}.
Moreover, the case (ii) in Lemma~\ref{lem1} shows that the MSP condition is also reduced
to \eqref{eq:p2} if the matrices $B^{ik}=[\sigma_{\alpha\beta}^{ik}]_{N\times N}$
are close to be symmetric. 
Nevertheless, the generality of the MSP condition cannot be covered by these cases in Lemma~\ref{lem1}, which is illustrated by Example~\ref{lem:last} in the final section.

Example \ref{exa:counterexam} illustrates that  in \eqref{eq:Aij} the coefficient of the correction term $p-2$ is \emph{optimal} to guarantee the Schauder regularity for the SPDEs \eqref{eq:main}. Indeed, if $p>2$ is fixed and
the coefficient $p-2$ in \eqref{eq:Aij} drops down a bit to $p-2-\eps>0$,
we can choose the value of $\mu$ satisfying
\[
\frac{2}{p-1}<\mu^{2}<\frac{2}{p-1-\eps},
\]
then it is easily verified that system \eqref{eq:ceg} satisfies \eqref{eq:lp-condition}
in this setting by taking $\lambda_{\alpha\beta}^{ik}=0$ and $p-2$ replaced by $p-2-\eps$. However,
Example \ref{exa:counterexam} has showed that when $t$ is large
enough $\E|\bm{u}(x,t)|^{p}$ becomes infinite for such a choice of
$\mu$,  let alone the $\cfun_{p}^{\hold}$-norm of the solution. More
examples in this respect are discussed in the final section.

Technically speaking, the MSP condition is explicitly used
to derive a class of mixed norm estimates for the model system in
the space $L^{p}(\PS;W_{2}^{n})$. A similar issue was addressed in
\cite{brzezniak2012stochastic} for a nonlocal SPDE. 
Owing to Sobolev's embedding the mixed norm estimates lead to
the local boundedness of $\E|\pd^{m}\bm{u}(x,t)|^{p}$, which plays a key
role in the derivation of the foundamental
interior estimate of Schauder-type for system \eqref{eq:main}.


The paper is organised as follows. In the next section we introduce
some notations and state our main results. In Sections \ref{sec:Integral-estimates-for}
and \ref{sec:Interior-H=0000F6lder-estimates} we consider the model
system
\[
\md u^{\alpha}=\big(a_{\alpha\beta}^{ij}\partial_{ij}u^{\beta}+f_{\alpha}\big)\md t+\big(\sigma_{\alpha\beta}^{ik}\partial_{i}u^{\beta}+g_{\alpha}^{k}\big)\vd\BM_{t}^{k},
\]
where the coefficients $a$ and $\sigma$ are \emph{random} but independent
of $x$. We prove the crucial mixed norm estimates in Section \ref{sec:Integral-estimates-for},
and then establish the interior H\"{o}lder estimate in Section \ref{sec:Interior-H=0000F6lder-estimates}.
In Section \ref{sec:H=0000F6lder-estimates-for} we complete the proofs
of our main results. 
The final
section is devoted to more comments and examples on the sharpness
and flexibility of the MSP condition.

\section{\label{sec:Main-results}Main results}

Let us first introduce our working spaces and associated notation.
A Banach-space valued H\"{o}lder continuous function is defined analogously
to the classical H\"{o}lder continuous function. Let $E$ be a 
Banach space, $\Dom$ a domain in $\R^{d}$ and $I$ an interval.
We define the parabolic modulus 
\[
|X|_{{\rm p}}=|x|+\sqrt{|t|}\quad\text{for}\ X=(x,t)\in Q:=\Dom\times I.
\]
For a space-time function $\bm{u}:Q\to E$, we define
\begin{align*}
[\bm{u}]_{m;Q}^{E} & :=\sup \{\|\pd^{\mathfrak{s}}\bm{u}(X)\|_{E}: X=(x,t)\in Q,\, |\mathfrak{s}| = m\},\\
|\bm{u}|_{m;Q}^{E} & :=\max\{[\bm{u}]_{k;Q}^{E}: k\le m \},\\
[\bm{u}]_{m+\hold;Q}^{E} & :=\sup_{|\mathfrak{s}|=m}\sup_{t\in I}\sup_{x,y\in\Dom}\frac{\|\pd^{\mathfrak{s}}\bm{u}(x,t)-\pd^{\mathfrak{s}}\bm{u}(y,t)\|_{E}}{|x-y|^{\hold}},\\
|\bm{u}|_{m+\hold;Q}^{E} & :=|\bm{u}|_{m;Q}^{E}+[\bm{u}]_{m+\hold;Q}^{E},\\{}
[\bm{u}]_{(m+\hold,\hold/2);Q}^{E} & :=\sup_{|\mathfrak{s}|=m}\sup_{X,Y\in Q}\frac{\|\pd^{\mathfrak{s}}\bm{u}(X)-\pd^{\mathfrak{s}}\bm{u}(Y)\|_{E}}{|X-Y|_{{\rm p}}^{\hold}},\\
|\bm{u}|_{(m+\hold,\hold/2);Q}^{E} & :=|\bm{u}|_{m;Q}^{E}+[\bm{u}]_{(m+\hold,\hold/2);Q}^{E}
\end{align*}
with $m\in\mathbf{N}:=\{0,1,2,\dots\}$ and $\hold\in(0,1)$, where
$\mathfrak{s}=(\mathfrak{s}_1,\cdots,\mathfrak{s}_d)\in\mathbf{N}^{d}$ with $|\mathfrak{s}|=\sum_{i=1}^{d}\mathfrak{s}_i$,
and all the derivatives of an $E$-valued function are defined with respect
to the \emph{spatial variable} in the strong sense, see \cite{hille1957functional}. In the following context, the space $E$ is either i) an Euclidean space, ii) the space $\ell^2$, 
or iii) $L_{\om}^{p}:=L^{p}(\PS)$ (abbreviation for  $L^p_\om$ for both $L^{p}(\PS;\R^N)$ or $L^{p}(\PS;\ell^2)$). 
We omit the superscript in cases (i) and (ii), and in case (iii), we
introduce some new notation:
\begin{align*}
\lbr\bm{u}\rbr_{m+\hold,p;Q} & :=[\bm{u}]_{m+\hold;Q}^{L_{\om}^{p}}\ ,\quad\lbr\bm{u}\rbr_{(m+\hold,\hold/2),p;Q}:=[\bm{u}]_{(m+\hold,\hold/2);Q}^{L_{\om}^{p}},\\
\bbr\bm{u}\bbr_{m+\hold,p;Q} & :=|\bm{u}|_{m+\hold;Q}^{L_{\om}^{p}}\ ,\quad\!\!\bbr\bm{u}\bbr_{(m+\hold,\hold/2),p;Q}:=|\bm{u}|_{(m+\hold,\hold/2);Q}^{L_{\om}^{p}}.
\end{align*}
As the random fields in this paper take values in different spaces like $\R^N$ (say, ${\bm u}$ and ${\bm f}$) or $\ell^2$ (say, ${\bm g}$), 
we shall use $|\cdot|$ uniformly for the standard norms in Euclidean spaces and in $\ell^2$, 
and $L^p_\om$ for both $L^{p}(\PS;\R^N)$ and $L^{p}(\PS;\ell^2)$; 
the specific meaning of the notation can be easily understood in context.

\begin{defn*}
The H\"{o}lder classes $C_{x}^{m+\hold}(Q;L_{\om}^{p})$ and $C_{x,t}^{m+\hold,\hold/2}(Q;L_{\om}^{p})$
are defined as the sets of all \emph{predictable} random fields $\bm{u}$
defined on $Q\times\PS$ and taking values in an Euclidean space or $\ell^2$ such
that $\bbr\bm{u}\bbr_{m+\hold,p;Q}$ and $\bbr\bm{u}\bbr_{(m+\hold,\hold/2),p;Q}$
are finite, respectively. 
\end{defn*}
The following notation for special domains are frequently used:
\[
B_{r}(x)=\big\{ y\in\R^{d}:|y-x|<r\big\},\quad Q_{r}(x,t)=B_{r}(x)\times(t-r^{2},t],
\]
and $B_{r}=B_{r}(0)$, $Q_{r}=Q_{r}(0,0)$, and also 
\[
\mathcal{Q}_{r,T}(x):=B_{r}(x)\times(0,T],\quad\mathcal{Q}_{r,T}=\mathcal{Q}_{r,T}(0)\quad\text{and}\quad\mathcal{Q}_{T}:=\R^{d}\times(0,T].
\]

\begin{assumption*}
The following conditions are used throughout the paper unless otherwise
stated:
\begin{enumerate}
\item[{\em i)}] For all $i,j=1,\dots,d$ and $\alpha,\beta=1,\dots,N$, the random
fields $a_{\alpha\beta}^{ij},b_{\alpha\beta}^{i},c_{\alpha\beta}$
and $f_{\alpha}$ are real-valued, and $\sigma_{\alpha\beta}^{i},\nu_{\alpha\beta}$
and $g_{\alpha}$ are $\ell^{2}$-valued; all of them are predictable. 
\item[{\em ii)}] $a_{\alpha\beta}^{ij}$ and $\sigma_{\alpha\beta}^{i}$ satisfy the MSP condition with some $p\in[2,\infty)$.
\item[{\em iii)}] For some $\hold\in(0,1)$, the classical $C_{x}^{\hold}$-norms of
$a_{\alpha\beta}^{ij},b_{\alpha\beta}^{i}$ and $c_{\alpha\beta}$,
and the $C_{x}^{1+\hold}$-norms of $\sigma_{\alpha\beta}^{i}$ and
$\nu_{\alpha\beta}$ are all dominated by a constant $K$.
\end{enumerate}
\end{assumption*}
We are ready to state the main results of the paper. The first result
is the a priori \emph{interior H\"{o}lder estimates} for system \eqref{eq:main}.
\begin{thm}
\label{thm:interior}Under the above setting, there exist two constants
$\rho_{0}\in(0,1)$ and $C>0$, both depending only on $d,N,\kappa,K,p$
and $\hold$, such that if $\bm{u}\in C^{2+\hold}_x(Q_1(X);L^p_\omega)$
satisfies \eqref{eq:main} in $Q_1(X)$ with $X=(x,t)\in\R^d\times[1,\infty)$, then
\begin{align}
\label{eq:interior}
&\rho^{2+\hold}\lbr\pd^{2}\bm{u}\rbr_{(\hold,\hold/2),p;Q_{\rho/2}(X)} \\
& \le C\Big\{\rho^{2}\bbr\bm{f}\bbr_{\hold,p;Q_{\rho}(X)}+\rho\,\bbr\bm{g}\bbr_{1+\hold,p;Q_{\rho}(X)}+\rho^{-\frac{d}{2}}\big[\E\|\bm{u}\|_{L^{2}(Q_{\rho}(X))}^{p}\big]^{\!\frac{1}{p}}\Big\}
\nonumber
\end{align}
for any $\rho\in(0,\rho_{0}]$, provided the right-hand side is finite.
\end{thm}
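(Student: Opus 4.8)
The plan is to establish Theorem~\ref{thm:interior} by the classical Schauder strategy of \emph{freezing the coefficients and perturbing}, reducing the variable-coefficient system \eqref{eq:main} to the model system analysed in Sections~\ref{sec:Integral-estimates-for}--\ref{sec:Interior-H=0000F6lder-estimates}. The decisive structural feature I would exploit is that the model system of Section~\ref{sec:Interior-H=0000F6lder-estimates} permits $a$ and $\sigma$ to be \emph{random and time-dependent}, requiring only independence of $x$. Consequently, at a base cylinder $Q_r(X_0)$ with $X_0=(x_0,t_0)$, I would freeze \emph{only the spatial variable}, setting $\bar a^{ij}_{\alpha\beta}(t,\om):=a^{ij}_{\alpha\beta}(x_0,t,\om)$ and $\bar\sigma^{ik}_{\alpha\beta}(t,\om):=\sigma^{ik}_{\alpha\beta}(x_0,t,\om)$. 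This is exactly why only the \emph{spatial} H\"older continuity of the coefficients in Assumption~(iii) is needed, and—since the MSP condition \eqref{eq:lp-condition} is pointwise in $(x,t,\om)$—the frozen coefficients $\bar a,\bar\sigma$ inherit it with the \emph{same} constant $\kappa$ (taking $\bar\lambda(t,\om)=\lambda(x_0,t,\om)$), so that the Section~\ref{sec:Interior-H=0000F6lder-estimates} estimate applies to them verbatim.

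With this freezing, $\bm u$ solves on $Q_r(X_0)$ the model system with principal part $\bar a,\bar\sigma$ and modified free terms
\[
\tilde f_\alpha=\bigl(a^{ij}_{\alpha\beta}-\bar a^{ij}_{\alpha\beta}\bigr)\pd_{ij}u^\beta+b^i_{\alpha\beta}\pd_i u^\beta+c_{\alpha\beta}u^\beta+f_\alpha,\qquad \tilde g^k_\alpha=\bigl(\sigma^{ik}_{\alpha\beta}-\bar\sigma^{ik}_{\alpha\beta}\bigr)\pd_i u^\beta+\nu^k_{\alpha\beta}u^\beta+g^k_\alpha.
\]
Applying the model interior estimate on $Q_r(X_0)$ and invoking the product rule for the $L^p_\om$-valued H\"older norms, $|vw|_{\hold}\le\|v\|_\infty|w|_{\hold}+|v|_{\hold}\|w\|_\infty$, together with the bounds $\|a-\bar a\|_{\infty;B_r(x_0)}\le Kr^{\hold}$ and $\lbr a\rbr_{\hold}\le K$ (and the analogues for $\sigma$), I would bound $\bbr\tilde{\bm f}\bbr_{\hold,p}$ and $\bbr\tilde{\bm g}\bbr_{1+\hold,p}$ by a \emph{small multiple} $\sim Kr^{\hold}$ of the top-order quantity $\lbr\pd^{2}\bm u\rbr_{(\hold,\hold/2),p}$, plus the genuine data $\bbr\bm f\bbr_{\hold,p}$, $\bbr\bm g\bbr_{1+\hold,p}$ and the intermediate norms $\bbr\pd\bm u\bbr$, $\bbr\pd^{2}\bm u\bbr_{0}$, $\bbr\bm u\bbr$.

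The remaining work is to \emph{absorb} the principal perturbation and dispose of the intermediate and lower-order terms. The intermediate norms $\bbr\pd\bm u\bbr$, $\bbr\pd^{2}\bm u\bbr_{0}$, and the non-small contribution $\lbr a\rbr_{\hold}\|\pd^{2}\bm u\|_\infty$ produced by the product rule, as well as the lower-order terms $b\pd\bm u$, $c\bm u$, $\nu\bm u$, I would control by the standard \emph{interpolation inequalities} for the $L^p_\om$-valued H\"older classes, each of which lets a lower norm be dominated by $\eps\,\lbr\pd^{2}\bm u\rbr_{(\hold,\hold/2),p}+C_\eps\,\rho^{-d/2}[\E\|\bm u\|^p_{L^2(Q_\rho(X))}]^{1/p}$. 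The genuine difficulty is that the model estimate delivers the top seminorm on the \emph{smaller} cylinder $Q_{r/2}(X_0)$, whereas the perturbation carries it on the \emph{larger} cylinder $Q_r(X_0)$, so absorption cannot be carried out at a single scale.

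To overcome this I would pass to the distance-weighted interior seminorms on $Q_{\rho/2}(X)$, weighting the $\pd^{2}\bm u$ H\"older seminorm at a point by a power of its parabolic distance to the parabolic boundary of $Q_\rho(X)$, and run the frozen-coefficient estimate over the family $\{Q_r(X_0)\}$ with $r$ comparable to that distance; for these weighted norms the $Q_{r/2}$-versus-$Q_r$ mismatch is swallowed by the weight, and the smallness factor $Kr^{\hold}$ is made uniformly below any prescribed $\eps$ by fixing $\rho_0$ small depending only on $d,N,\kappa,K,p,\hold$. A standard filling-hole iteration then yields finiteness together with the weighted bound; undoing the weights and rescaling by $\rho$ produces the homogeneous form \eqref{eq:interior}. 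The hard part will be precisely this absorption across scales: verifying that the weighted interior norms are finite \emph{a priori}—which is guaranteed by the hypothesis $\bm u\in C^{2+\hold}_x(Q_1(X);L^p_\om)$ and by the assumed finiteness of the right-hand side—and ensuring that the constants arising from interpolation and from the covering are independent of $\bm u$.
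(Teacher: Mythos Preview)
Your overall strategy---freeze only the spatial variable in $a,\sigma$, treat the resulting discrepancy as part of the free terms, and use the $C^\hold$-smallness $\|a-\bar a\|_\infty\le Kr^\hold$ to absorb the top-order perturbation---is exactly the paper's. The paper also uses the interpolation inequality (Lemma~\ref{lem:interp}) to reduce all intermediate norms to $\eps\lbr\pd^2\bm u\rbr + C_\eps\rho^{-d/2}[\E\|\bm u\|_{L^2}^p]^{1/p}$, just as you propose.

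Where you diverge is in the \emph{localisation and absorption} step. You plan to apply the model estimate directly to $\bm u$ on cylinders $Q_r(X_0)$ and then handle the $Q_{r/2}$-versus-$Q_r$ mismatch by introducing distance-weighted interior seminorms and a filling-hole argument. The paper instead multiplies by a cutoff $\zeta$ supported in $Q_R$ and equal to $1$ on $Q_r$ (with $\rho/2\le r<R\le\rho$), so that $\bm v=\zeta\bm u$ satisfies a model system \emph{with zero parabolic boundary data}; Corollary~\ref{cor:holder-model} is then applied to $\bm v$, producing an inequality of the form
\[
\lbr\pd^2\bm u\rbr_{(\hold,\hold/2),p;Q_r}\le C(\eps+K\rho^\hold)\lbr\pd^2\bm u\rbr_{(\hold,\hold/2),p;Q_R}+C_1(R-r)^{-2-\hold-d/2}\|\bm u\|_{L^p_\om L^2(Q_R)}+\text{data},
\]
valid for \emph{all} $\rho/2\le r<R\le\rho$. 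The absorption is then carried out not via weighted norms but by the elementary iteration Lemma~\ref{lem:iter}: once $\rho\le\rho_0$ is chosen so that the prefactor is $\le 1/2$, that lemma immediately eats the $Q_R$-term and yields \eqref{eq:interior}. This avoids introducing the weighted-seminorm machinery altogether; the cutoff also spares you the covering/overlap bookkeeping that the weighted approach requires. Conversely, your route has the advantage of not generating the cutoff commutator terms ($\pd\zeta\cdot\pd\bm u$, $\pd^2\zeta\cdot\bm u$, $\zeta_t\bm u$) that the paper must then interpolate away. Both arguments are standard in Schauder theory and lead to the same estimate with constants depending on the same parameters.
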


By rescaling one can obtain the local estimate arround any point $X\in \R^d\times(0,\infty)$.
\smallskip

The second theorem is regarding the global H\"{o}lder estimate and solvability
for the Cauchy problem for system \eqref{eq:main} with zero initial
condition.
\begin{thm}
\label{thm:cauchy}Under the above setting, if $\bm{f}\in C^{\hold}_x(\mathcal{Q}_{T};L_{\om}^{p})$
and $\bm{g}\in C_{x}^{1+\hold}(\mathcal{Q}_{T};L_{\om}^{p})$ with
$T>0$, then system \eqref{eq:main} with the initial condition
\[
\bm{u}(x,0)=\bm{0}\quad\forall\,x\in\R^{d}
\]
admits a unique solution $\bm{u}\in C_{x,t}^{2+\hold,\hold/2}(\mathcal{Q}_{T};L_{\om}^{p})$,
and it satisfies the estimate
\begin{equation}
\bbr\bm{u}\bbr_{(2+\hold,\hold/2),p;\mathcal{Q}_{T}}\le C\,\me^{CT}\big(\bbr\bm{f}\bbr_{\hold,p;\mathcal{Q}_{T}}+\bbr\bm{g}\bbr_{1+\hold,p;\mathcal{Q}_{T}}\big),\label{eq:global}
\end{equation}
where the constant $C$ depends only on $d,N,\kappa,K,p$ and $\hold$.
\end{thm}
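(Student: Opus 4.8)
The plan is to split the theorem into three parts: an \emph{a priori estimate} asserting \eqref{eq:global} for any given solution $\bm u\in C^{2+\hold,\hold/2}_{x,t}(\mathcal{Q}_T;L^p_\om)$; \emph{existence} by the method of continuity; and \emph{uniqueness} as an immediate corollary of the a priori bound applied to the difference of two solutions, for which $\bm f=\bm g=\bm 0$ and hence the right-hand side of \eqref{eq:global} vanishes. The whole difficulty is concentrated in the a priori estimate, so I would establish that first and in a form uniform over a homotopy of operators, after which existence and uniqueness are routine.

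For the a priori estimate I would first neutralise the initial layer near $t=0$ by extending $\bm u$, $\bm f$, and $\bm g$ by $\bm 0$ to $\{t<0\}$: since the initial data vanish and \eqref{eq:main} is linear, the extended field solves the same system on $\R^d\times(-\infty,T]$ with the extended (still H\"older) free terms, so a full backward cylinder $Q_\rho(X)$ is available at every point $X\in\R^d\times(0,T]$ and the $\sqrt t$-restriction on the radius disappears. After rescaling, Theorem~\ref{thm:interior} then yields at each such $X$ a bound on $\lbr\pd^2\bm u\rbr_{(\hold,\hold/2),p;Q_{\rho/2}(X)}$ by the local data norms plus the term $\rho^{-d/2}[\E\|\bm u\|^p_{L^2(Q_\rho(X))}]^{1/p}$. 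Taking the supremum over $X$ and patching the local seminorms into the global one controls $\lbr\pd^2\bm u\rbr_{(\hold,\hold/2),p;\mathcal{Q}_T}$, the lower-order contributions $b\,\pd\bm u+c\bm u$ and $\nu\bm u$ being absorbed into $\bm f$ and $\bm g$ at the cost of lower-order norms of $\bm u$.

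To close the estimate I would invoke interpolation inequalities in the $L^p_\om$-valued parabolic H\"older scale to dominate the intermediate quantities $\bbr\bm u\bbr_{1+\hold,p;\mathcal{Q}_T}$, $\bbr\bm u\bbr_{\hold,p;\mathcal{Q}_T}$, and the $L^2$-average appearing above by $\eps\,\bbr\bm u\bbr_{(2+\hold,\hold/2),p;\mathcal{Q}_T}+C_\eps\,|\bm u|^{L^p_\om}_{0;\mathcal{Q}_T}$, and then absorb the top-order seminorm on the left by choosing $\eps$ small. What remains is to bound $|\bm u|^{L^p_\om}_{0;\mathcal{Q}_T}=\sup_{x,t}(\E|\bm u(x,t)|^p)^{1/p}$, and here the MSP condition does the decisive work: a global energy estimate in the mixed space $L^p(\PS;W_2^n)$, obtained by the same Fourier-side It\^o computation underlying the mixed-norm estimates of Section~\ref{sec:Integral-estimates-for}, yields $\sup_{t\le T}\E\|\bm u(t)\|^p_{W_2^n}\le\me^{CT}(\text{data})$ via Gronwall's inequality, and Sobolev embedding converts this into the desired pointwise bound, supplying the factor $\me^{CT}$ in \eqref{eq:global}.

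For existence I would run the method of continuity along $L_\tht=(1-\tht)\Delta+\tht L$, $\tht\in[0,1]$, where $L$ is the full operator of \eqref{eq:main} and $L_0$ is the decoupled heat system $\md u^\alpha=(\Delta u^\alpha+f_\alpha)\vd t+g^k_\alpha\vd\BM^k_t$, whose solvability in $C^{2+\hold,\hold/2}_{x,t}(\mathcal{Q}_T;L^p_\om)$ follows from the model analysis of Sections~\ref{sec:Integral-estimates-for}--\ref{sec:Interior-H=0000F6lder-estimates}. Choosing $\lambda_\tht=\tht\lambda$ one checks that $\mathcal{A}(L_\tht)=2(1-\tht)I+\tht\mathcal{A}(L)+\tht(1-\tht)[\sigma\sigma^\tr+(p-2)(\sigma-\lambda)(\sigma-\lambda)^\tr]$ still satisfies the Legendre--Hadamard bound with the uniform constant $\min(\kappa,2)$, so \eqref{eq:global} holds uniformly in $\tht$; standard open--closed reasoning then gives solvability for all $\tht\in[0,1]$. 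Uniqueness is immediate from \eqref{eq:global} with vanishing data. I expect the main obstacle to be the closing step of the a priori estimate: proving the vector-valued interpolation inequalities in these parabolic H\"older classes and, above all, the global $L^p(\PS;W_2^n)$ energy estimate under the MSP condition alone. Because that condition only guarantees Legendre--Hadamard (not pointwise) coercivity, the It\^o computation must genuinely exploit the $(p-2)$-correction term in \eqref{eq:Aij} in the Fourier variables rather than any pointwise ellipticity—precisely the obstruction that Example~\ref{exa:counterexam} shows cannot be circumvented.
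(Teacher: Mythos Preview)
Your overall architecture---a priori estimate first, then continuity method, with uniqueness immediate---matches the paper, and the zero extension to $\{t<0\}$, the application of Theorem~\ref{thm:interior} at every point, and the use of interpolation (Lemma~\ref{lem:interp}) to reduce to a zeroth-order control of $\bm u$ are all correct. The gap is in your closing step. You propose to bound $\sup_{x,t}(\E|\bm u(x,t)|^p)^{1/p}$ by a global $L^p(\PS;W^n_2)$ energy estimate for the full system \eqref{eq:main}, citing the It\^o computation of Section~\ref{sec:Integral-estimates-for}. But that computation is proved only for the \emph{model} system with $x$-independent coefficients: the integration-by-parts step that kills $\int\lambda^{ik}_{\alpha\beta}u^\alpha\pd_iu^\beta\,dx$ and the passage from Legendre--Hadamard to integral coercivity (Lemma~\ref{lem:LH}) both require the coefficients to be constant in $x$. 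There is no such global estimate for \eqref{eq:main} in the paper, and under a mere Legendre--Hadamard condition with variable coefficients it is not straightforward. A second obstruction is that on $\R^d$ a solution in $C^{2+\hold}_x(\mathcal{Q}_T;L^p_\om)$ need not lie in $W^n_2(\R^d)$ at all, so $\E\|\bm u(t)\|^p_{W^n_2}$ may be infinite and the Sobolev embedding is unavailable.

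The paper closes differently and, in fact, without using the MSP condition again at this stage. It applies It\^o's formula to the pointwise quantity $|\bm u(x,t)|^p$ (not to $\|\bm u(\cdot,t)\|_{L^2_x}^p$), integrates crudely over a local cylinder $B_R(x)\times(0,\tau]$, and obtains
\[
\sup_{t\le\tau}\E\!\int_{B_R(x)}\!|\bm u(y,t)|^p\,\md y\ \le\ C_4\,\E\!\int_{\mathcal{Q}_{R,\tau}(x)}\!\bigl(|\pd^2\bm u|^p+|\bm u|^p+|\bm f|^p+|\bm g|^p\bigr)\,\md X,
\]
which is $\le C_4\tau$ times the quantity already on the right of the interior bound. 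Choosing $\tau$ small absorbs this into the left-hand side and gives \eqref{eq:global} on $[0,\tau]$; the growth $\me^{CT}$ then arises from an \emph{induction in time} with step $\tau$ (applying the short-time estimate to $\bm u(\cdot,t+S)-\bm u(\cdot,S)$), not from Gronwall on a global energy. Thus the MSP condition enters only through Theorem~\ref{thm:interior}, and the residual zeroth-order term is eliminated by a short-time smallness trick plus iteration rather than by an additional energy argument.
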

\begin{rem*}
Theorem \ref{thm:cauchy} still holds true if the system is considered
on the torus $\mathbf{T}^{d}=\R^{d}/\mathbf{Z}^{d}$ instead of $\R^{d}$.
\end{rem*}
\begin{rem*}
The above theorems show that the solutions possess the H\"older continuity in time even with time-irregular coefficients and free terms.
A similar property of classical PDEs is well-known in the literature, see for example~\cite{lieberman1996second,dong2015schauder} and references therein.
In view of an anisotropic Kolmogorov
continuity theorem (see \cite{dalang2007hitting}) the solution obtained in Theorem~\ref{thm:cauchy} has a modification that is H\"older continuous jointly in space and time.
\end{rem*}

\section{\label{sec:Integral-estimates-for}Integral estimates for the model
system}

Throughout this section we assume that $a_{\al\be}^{ij}$ and $\si_{\al\be}^{ik}$
depend only on $(t,\om)$, but \emph{independent of $x$}, satisfying
the MSP condition
(in this case {\em $\lambda_{\al\be}^{ik}$ is chosen to be independent of $x$}) and 
\begin{equation}
|a_{\alpha\beta}^{ij}|,\;|\sigma_{\alpha\beta}^{ij}|\leq K,\quad\forall t,\om,\label{eq:modeq-coe-bound}
\end{equation}
and we consider the following model system
\begin{equation}
\md u^{\alpha}=\big(a_{\alpha\beta}^{ij}\partial_{ij}u^{\beta}+f_{\alpha}\big)\md t+\big(\sigma_{\alpha\beta}^{ik}\partial_{i}u^{\beta}+g_{\alpha}^{k}\big)\vd\BM_{t}^{k}.\label{eq:model}
\end{equation}
The aim of this section is to derive several auxiliary estimates for
the model system which are used to prove the interior H\"{o}lder estimate
in the next section.

In this section and the next, {\em we may consider \eqref{eq:model} in the entire space $\R^n\times\R$.}
On the one hand, we can always extend \eqref{eq:main} and \eqref{eq:model} to the entire space if we require $u(x,0)=0$.
Indeed, the zero extensions of $\bm{u}$, $\bm{f}$ and $\bm{g}$
(i.e., these functions are defined to be zero for $t<0$) satisfy the equations in the entire space, 
where the extension of coefficients and Wiener processes are quite easy; for example,
we can define $a_{\alpha\beta}^{ij}(t) = \delta^{ij}$ and $\sigma_{\alpha\beta}^{ik}=0$ for $t<0$, and $\BM_t := \tilde{\BM}_{-t}$ for $t<0$ with $\tilde{\BM}$ being an independent copy of $\BM$.
On the other hand,  we mainly concern the local estimates for the equation \eqref{eq:model} in the following two sections, so we can only focus on the estimates around the origin on account of a translation.
Indeed, we can reduce the estimates around a point $(x_0,t_0)$ to the estimates around the origin by use of the change of variables $(x,t)\mapsto (x-x_0,t-t_0)$.

Let $\bO\in\R^{d}$ and $H^{m}(\bO)=W_{2}^{m}(\bO)$ be the usual
Sobolev spaces. Let $I\subset\R$ be an interval and $Q=\bO\times I$. For $p,q\in[1,\infty]$,
define 
\[
L_{\om}^{p}L_{t}^{q}H_{x}^{m}(Q):=L^{p}(\PS;L^{q}(I;H^{m}(\bO;\R^{N}))).
\]
In what follows, we denote $\partial^m \bm{u}$ the set of all $m$-order derivatives of a function $\bm{u}$.
These $\partial^m \bm{u}(x)$ for each $x$ and $(\omega,t)$ are regarded as elements
of a Euclidean space of proper dimension.

Our $\mathcal{C}^{2+\hold}$-theory is grounded in the following mixed
norm estimates for model system \eqref{eq:model}, in which the modified
stochastic parabolicity condition \eqref{eq:lp-condition} plays a
key role. 
\begin{thm}
\label{thm:globe-lpl2}Let $p\in[2,\infty)$ and $m\geq 0$. Suppose
$\bm{f}\in L_{\om}^{p}L_{t}^{2}H_{x}^{m-1}(\mathcal{Q}_{T})$ and
$\bm{g}\in L_{\om}^{p}L_{t}^{2}H_{x}^{m}(\mathcal{Q}_{T})$.
Then \eqref{eq:model} with zero initial value admits a unique solution
$\bm{u}\in L_{\om}^{p}L_{t}^{\infty}H_{x}^{m}(\mathcal{Q}_{T})\cap L_{\om}^{p}L_{t}^{2}H_{x}^{m+1}(\mathcal{Q}_{T})$.
Moreover, for any multi-index $\mathfrak{s}$ such that $|\mathfrak{s}|\leq m$,
\begin{equation}
\|\pd^{\mathfrak{s}}\bm{u}\|_{L_{\om}^{p}L_{t}^{\infty}L_{x}^{2}}+\|\pd^{\mathfrak{s}}\bm{u}_{x}\|_{L_{\om}^{p}L_{t}^{2}L_{x}^{2}}\leq C\Big(\|\pd^{\mathfrak{s}}\bm{f}\|_{L_{\om}^{p}L_{t}^{2}H_{x}^{-1}}+\|\pd^{\mathfrak{s}}\bm{g}\|_{L_{\om}^{p}L_{t}^{2}L_{x}^{2}}\Big),\label{eq:LPL2-est}
\end{equation}
where the constant $C$ depends only on $d,p,T,\kappa,$ and $K$. 
\end{thm}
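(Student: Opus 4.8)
The plan is to first reduce to $m=0$. Since $a$ and $\sigma$ depend only on $(t,\om)$, differentiating \eqref{eq:model} in $x$ shows that for every multi-index $\mathfrak{s}$ the field $\pd^{\mathfrak{s}}\bm{u}$ solves the same model system with $\bm{f},\bm{g}$ replaced by $\pd^{\mathfrak{s}}\bm{f},\pd^{\mathfrak{s}}\bm{g}$. Hence it suffices to prove \eqref{eq:LPL2-est} for $\mathfrak{s}=0$ and then apply it to $\pd^{\mathfrak{s}}\bm{u}$. For $p=2$ the MSP condition is exactly the coercivity \eqref{eq:p2}, so existence and uniqueness of a solution in $L^2_\om L^\infty_t L^2_x\cap L^2_\om L^2_t H^1_x$ follow from the standard variational theory of linear monotone SPDEs; the general statement for $p>2$ then follows from the a priori bound below by a routine approximation (mollifying the data, or a Galerkin scheme, and passing to the limit).

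To establish the a priori estimate, set $\Phi(t):=\|\bm{u}(t)\|_{L^2_x}^2$ and apply It\^{o}'s formula to $\Phi(t)^{p/2}$. Write $m^k:=\int_{\R^d}u^\alpha(\sigma^{ik}_{\alpha\beta}\pd_i u^\beta+g^k_\alpha)\,\vd x$ for the density of the martingale part of $\Phi$, so that $\md\langle\Phi\rangle=4|m|^2\,\vd t$ with $|m|^2=\sum_k(m^k)^2$. The drift of $\Phi^{p/2}$ then consists of $\tfrac p2\Phi^{p/2-1}$ times the drift of $\Phi$ together with the It\^{o}-correction term $\tfrac{p(p-2)}{2}\Phi^{p/2-2}|m|^2$. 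Integrating the second-order term by parts (legitimate since $a$ is independent of $x$), the leading contribution to the drift of $\Phi^{p/2}$ is
\[
-\frac{p}{2}\Phi^{p/2-1}\int_{\R^{d}}\big(2a_{\alpha\beta}^{ij}-\sigma_{\gamma\alpha}^{ik}\sigma_{\gamma\beta}^{jk}\big)\pd_{i}u^{\alpha}\pd_{j}u^{\beta}\,\vd x+\frac{p(p-2)}{2}\Phi^{p/2-2}|m_{0}|^{2},
\]
where $m_0^k:=\int_{\R^d}u^\alpha\sigma^{ik}_{\alpha\beta}\pd_i u^\beta\,\vd x$ is the leading part of $m^k$.

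The crucial step is to exploit the gauge freedom carried by $\lambda$. Because $\lambda^{ik}_{\alpha\beta}=\lambda^{ik}_{\beta\alpha}$ and is $x$-independent, one has $\int_{\R^d}\lambda^{ik}_{\alpha\beta}u^\alpha\pd_i u^\beta\,\vd x=\tfrac12\int_{\R^d}\lambda^{ik}_{\alpha\beta}\pd_i(u^\alpha u^\beta)\,\vd x=0$, whence $m_0^k=\int_{\R^d}u^\alpha(\sigma^{ik}_{\alpha\beta}-\lambda^{ik}_{\alpha\beta})\pd_i u^\beta\,\vd x$. Cauchy--Schwarz in $x$ then yields $|m_0|^2\le\Phi\int_{\R^d}(\sigma^{ik}_{\gamma\alpha}-\lambda^{ik}_{\gamma\alpha})(\sigma^{jk}_{\gamma\beta}-\lambda^{jk}_{\gamma\beta})\pd_i u^\alpha\pd_j u^\beta\,\vd x$, and substituting this into the display recombines the two terms into exactly $-\tfrac p2\Phi^{p/2-1}\int_{\R^d}\mathcal{A}^{ij}_{\alpha\beta}(p,\lambda)\pd_i u^\alpha\pd_j u^\beta\,\vd x$; it is here that the factor $p-2$ in \eqref{eq:Aij} is forced. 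The MSP condition now supplies coercivity: via Plancherel's theorem the integral equals $\int_{\R^d}\mathcal{A}^{ij}_{\alpha\beta}(p,\lambda)\xi_i\xi_j\hat u^\alpha\overline{\hat u^\beta}\,\vd\xi$, and since \eqref{eq:lp-condition} extends to the complex vector $\hat{\bm u}(\xi)$ by splitting into real and imaginary parts, one obtains the G\r{a}rding bound $\int_{\R^d}\mathcal{A}^{ij}_{\alpha\beta}(p,\lambda)\pd_i u^\alpha\pd_j u^\beta\,\vd x\ge\kappa\|\bm{u}_x\|_{L^2_x}^2$.

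It remains to absorb $\bm{f},\bm{g}$ and to close the estimate. The terms carrying $\bm f$ and $\bm g$---namely $2\langle u,f\rangle$, the cross and square terms in $\sum_k\|\sigma^{ik}\pd_i u+g^k\|^2$, and the $g$-part of $m$---are handled by Young's inequality, splitting off an arbitrarily small multiple of $\Phi^{p/2-1}\|\bm{u}_x\|^2$ (absorbed by the coercive term) plus a multiple of $\Phi^{p/2}$, and leaving $C\Phi^{p/2-1}(\|\bm f\|_{H^{-1}_x}^2+\|\bm g\|_{L^2_x}^2)$. Taking the supremum in $t$ and expectations, the Burkholder--Davis--Gundy inequality controls the martingale part (whose quadratic variation is $\lesssim\int\Phi^{p-2}|m|^2\,\vd t$, again bounded using $|m|^2\le C\Phi(\cdots)$), and a final use of Young's inequality and Gr\"onwall's lemma yields both norms on the left of \eqref{eq:LPL2-est}. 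The main obstacle is precisely the coupling between the quadratic-variation term $\Phi^{p/2-2}|m|^2$ and the drift: only the gauge identity for $\lambda$ together with the $(p-2)$-weighted Cauchy--Schwarz bound turns this otherwise intractable term into the MSP form $\mathcal{A}(p,\lambda)$. A secondary technical point is the rigorous justification of It\^{o}'s formula for $\Phi^{p/2}$, most cleanly done in Fourier variables---where \eqref{eq:model} becomes, for each $\xi$, a linear SDE in $\mathbb{C}^N$---or by mollification.
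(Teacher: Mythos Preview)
Your argument is essentially the paper's proof: the same reduction to $m=0$, the same It\^o formula for $\|\bm u(t)\|_{L^2_x}^p$, the same gauge identity $\int\lambda^{ik}_{\alpha\beta}u^\alpha\pd_i u^\beta=0$ turning $m_0^k$ into the $(\sigma-\lambda)$-form, the same Cauchy--Schwarz step producing the $(p-2)$ factor and hence $\mathcal A(p,\lambda)$, the same Fourier/Plancherel coercivity (the paper states this as Lemma~\ref{lem:LH}), and the same BDG--Young--Gronwall closure. One small point: the It\^o computation on $\Phi^{p/2}$ only delivers $\E\int_0^T\Phi^{p/2-1}\|\bm u_x\|_{L^2_x}^2\,\md t$, not $\|\bm u_x\|_{L^p_\omega L^2_tL^2_x}^p=\E\bigl(\int_0^T\|\bm u_x\|_{L^2_x}^2\,\md t\bigr)^{p/2}$, so the second norm on the left of \eqref{eq:LPL2-est} does not fall out directly; the paper obtains it by a separate step, returning to the $L^2$-energy identity for $\Phi$, raising to the $p/2$ power, and applying BDG once more. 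The paper also makes the localisation explicit via a stopping-time argument rather than mollification, but that is a matter of taste.
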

The proof of Theorem \ref{thm:globe-lpl2} is postponed to the end
of this section. A quick consequence of this theorem is the following
local estimates for model equations with smooth free terms. 
\begin{prop}
\label{lem:local-LPL2-1}Let $m\geq1$, $p\geq2$, $r>0$ and $0<\theta<1$,
and let ${\bm{u}}\in L_{\om}^{p}L_{t}^{\infty}H_{x}^{m}(Q_{r})\cap L_{\om}^{p}L_{t}^{2}H_{x}^{m+1}(Q_{r})$
solve \eqref{eq:model} in $Q_{r}$ with ${\bm{f}}\in L_{\om}^{p}L_{t}^{2}H_{x}^{m-1}(Q_{r})$
and ${\bm{g}}\in L_{\om}^{p}L_{t}^{2}H_{x}^{m}(Q_{r})$. Then there
is a constant $C=C(d,p,\kappa,K,m,\theta)$ such that 
\begin{gather}
\|\partial^{m}{\bm{u}}\|_{L_{\om}^{p}L_{t}^{\infty}L_{x}^{2}(Q_{\theta r})}+\|\partial^{m}{\bm{u}}_{x}\|_{L_{\om}^{p}L_{t}^{2}L_{x}^{2}(Q_{\theta r})}\le Cr^{-m-1}\|{\bm{u}}\|_{L_{\om}^{p}L_{t}^{2}L_{x}^{2}(Q_{r})}\label{eq:locallpl2-est-1-1}\\
+ C \sum_{k=0}^{m-1} r^{-m+k+1} \|\partial^{k} \bm{f}\|_{L^p_\om L^2_t L^2_{x}(Q_{r})} 
+ C \sum_{k=0}^{m} r^{-m+k} \|\partial^k \bm{g}\|_{L^p_\om L^2_t L^2_{x}(Q_{r})}.\nonumber 
\end{gather}
Consequently, for $2(m-|\mathfrak{s}|)>d$, 
\begin{gather}
\|\sup_{Q_{\theta r}}|\partial^{\mathfrak{s}}{\bm{u}}|\|_{L_{\om}^{p}}\le Cr^{-|\mathfrak{s}|-d/2-1}\|{\bm{u}}\|_{L_{\om}^{p}L_{t}^{2}L_{x}^{2}(Q_{r})}
\label{eq:locallpl2-est-2-1}\\
+ C \sum_{k=0}^{m-1} r^{-|\mathfrak{s}|-d/2+k+1} \|\partial^{k} \bm{f}\|_{L^p_\om L^2_t L^2_{x}(Q_{r})} 
+ C \sum_{k=0}^{m} r^{-|\mathfrak{s}|-d/2+k} \|\partial^k \bm{g}\|_{L^p_\om L^2_t L^2_{x}(Q_{r})}.\nonumber 
\end{gather}
\end{prop}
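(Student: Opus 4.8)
The plan is to reduce \eqref{eq:locallpl2-est-1-1} to the global bound \eqref{eq:LPL2-est} by a differentiation-and-localisation argument, to iterate the resulting estimate over the order of differentiation, and then to deduce \eqref{eq:locallpl2-est-2-1} from a scaled Sobolev embedding in $x$.

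First I would exploit that $a$ and $\sigma$ are independent of $x$: differentiating \eqref{eq:model} shows that for every multi-index $\mathfrak{s}$ the field $\pd^{\mathfrak{s}}\bm{u}$ again solves \eqref{eq:model} in $Q_r$ with free terms $\pd^{\mathfrak{s}}\bm{f}$ and $\pd^{\mathfrak{s}}\bm{g}$. Fix concentric cylinders $Q_{\rho}\subset Q_{\rho'}$ with $\theta r\le\rho<\rho'\le r$ and a smooth cutoff $\zeta$ equal to $1$ on $Q_{\rho}$, supported in $Q_{\rho'}$, vanishing near the parabolic bottom of $Q_{\rho'}$, with $|\pd_x^{a}\pd_t^{b}\zeta|\le C(\rho'-\rho)^{-a-2b}$. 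Writing $\bm{v}=\pd^{\mathfrak{s}}\bm{u}$ and applying It\^o's product rule, $\zeta\bm{v}$ solves \eqref{eq:model} on the whole space (after the zero-extension and time-shift described above, so that Theorem~\ref{thm:globe-lpl2} with $m=0$ applies) with drift free term $\zeta\pd^{\mathfrak{s}}\bm{f}$ and diffusion free term $\zeta\pd^{\mathfrak{s}}\bm{g}$ corrected by the commutators
\[
-a_{\alpha\beta}^{ij}\big[(\pd_i\zeta)\pd_j v^{\beta}+(\pd_j\zeta)\pd_i v^{\beta}+(\pd_{ij}\zeta)v^{\beta}\big]+(\pd_t\zeta)v^{\alpha}
\quad\text{and}\quad -\sigma_{\alpha\beta}^{ik}(\pd_i\zeta)v^{\beta}.
\]
The first-order commutator $(\pd_i\zeta)\pd_j v$ carries one more derivative than the left-hand side of \eqref{eq:LPL2-est}; the key point is to write it in divergence form, $(\pd_i\zeta)\pd_j v=\pd_j[(\pd_i\zeta)v]-(\pd_{ij}\zeta)v$, so that it enters the \emph{$H^{-1}_x$} data permitted by Theorem~\ref{thm:globe-lpl2}, at the cost of terms of order $|\mathfrak{s}|$ only. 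After absorbing the genuinely top-order remainder into the left-hand side (an $\varepsilon$-absorption on a slightly larger cylinder, handled by the standard iteration lemma), this yields the one-step Caccioppoli estimate
\begin{align*}
\|\pd^{\mathfrak{s}}\bm{u}\|_{L_{\om}^{p}L_{t}^{\infty}L_{x}^{2}(Q_{\rho})}+\|\pd^{\mathfrak{s}}\bm{u}_x\|_{L_{\om}^{p}L_{t}^{2}L_{x}^{2}(Q_{\rho})}
&\le C\big(\|\pd^{\mathfrak{s}}\bm{f}\|_{L_{\om}^{p}L_{t}^{2}H_{x}^{-1}(Q_{\rho'})}+\|\pd^{\mathfrak{s}}\bm{g}\|_{L_{\om}^{p}L_{t}^{2}L_{x}^{2}(Q_{\rho'})}\big)\\
&\quad+\frac{C}{\rho'-\rho}\|\pd^{\mathfrak{s}}\bm{u}\|_{L_{\om}^{p}L_{t}^{2}L_{x}^{2}(Q_{\rho'})},
\end{align*}
in which one order of differentiation has been traded for a factor $(\rho'-\rho)^{-1}$.

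Next I would iterate this estimate over the order of differentiation. Choosing a chain $Q_{\theta r}=Q_{r_{m+1}}\subset\cdots\subset Q_{r_0}=Q_{r}$ with gaps $r_{k}-r_{k+1}\sim(1-\theta)r/(m+1)$ and applying the one-step bound with $|\mathfrak{s}|=k$ for $k=m,m-1,\dots,0$, the $L_{\om}^{p}L_{t}^{2}L_{x}^{2}$ norm of the $(k{+}1)$-st order derivatives on an inner cylinder is controlled by that of the $k$-th order derivatives on the next one, up to the free terms of order $k$. Telescoping down to order $0$ produces the factor $r^{-(m+1)}$ in front of $\|\bm{u}\|_{L_{\om}^{p}L_{t}^{2}L_{x}^{2}(Q_{r})}$, while the top-order norms $\|\pd^{m}\bm{u}\|_{L_{\om}^{p}L_{t}^{\infty}L_{x}^{2}}$ and $\|\pd^{m}\bm{u}_x\|_{L_{\om}^{p}L_{t}^{2}L_{x}^{2}}$ come from the last application; the same telescoping simultaneously yields $\|\pd^{k}\bm{u}\|_{L_{\om}^{p}L_{t}^{\infty}L_{x}^{2}(Q_{\theta r})}\le Cr^{-k-1}\|\bm{u}\|_{L_{\om}^{p}L_{t}^{2}L_{x}^{2}(Q_{r})}$ plus free terms for every $k\le m$, which I shall use below. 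Finally, since $\pd^{k}\bm{f}$ is a derivative of $\pd^{k-1}\bm{f}$, one has $\|\pd^{k}\bm{f}\|_{H_{x}^{-1}}\le C\|\pd^{k-1}\bm{f}\|_{L_{x}^{2}}$, lowering the top index on $\bm{f}$ from $m$ to $m-1$; collecting the accumulated powers of $r$ reproduces exactly the weights $r^{-m+k+1}$ on $\|\pd^{k}\bm{f}\|_{L^2}$ and $r^{-m+k}$ on $\|\pd^{k}\bm{g}\|_{L^2}$ in \eqref{eq:locallpl2-est-1-1}.

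For \eqref{eq:locallpl2-est-2-1} I would use, for each fixed $(t,\om)$, the scaled Sobolev inequality on $B_{\theta r}$: when $2(m-|\mathfrak{s}|)>d$, the embedding $H^{m-|\mathfrak{s}|}(B_{\theta r})\hookrightarrow L^{\infty}(B_{\theta r})$ gives
\[
\sup_{B_{\theta r}}|\pd^{\mathfrak{s}}\bm{u}(\cdot,t)|\le C\sum_{j=|\mathfrak{s}|}^{m}r^{\,j-|\mathfrak{s}|-d/2}\,\|\pd^{j}\bm{u}(\cdot,t)\|_{L^{2}(B_{\theta r})}.
\]
Taking the supremum over $t$ and then the $L_{\om}^{p}$ norm bounds the left-hand side of \eqref{eq:locallpl2-est-2-1} by $C\sum_{j=|\mathfrak{s}|}^{m}r^{\,j-|\mathfrak{s}|-d/2}\|\pd^{j}\bm{u}\|_{L_{\om}^{p}L_{t}^{\infty}L_{x}^{2}(Q_{\theta r})}$; inserting the bounds on $\|\pd^{j}\bm{u}\|_{L_{\om}^{p}L_{t}^{\infty}L_{x}^{2}(Q_{\theta r})}$ from the previous paragraph, the $r$-powers combine to $r^{-|\mathfrak{s}|-d/2-1}$ uniformly in $j$, which yields \eqref{eq:locallpl2-est-2-1}. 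The hard part throughout is the one-step estimate: because the first-order commutator has one derivative too many, only the $H^{-1}_x$-data form of Theorem~\ref{thm:globe-lpl2} together with the $\varepsilon$-absorption delivers the \emph{sharp} gap-weight $(\rho'-\rho)^{-1}$, and any lossier weight would degrade the final exponent below the stated $r^{-m-1}$; securing this sharp Caccioppoli bound is the crux, after which the iteration and the Sobolev step are routine.
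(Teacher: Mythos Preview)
Your overall strategy---localise by a cutoff, feed the resulting equation into Theorem~\ref{thm:globe-lpl2}, iterate in the order of differentiation, and finish by Sobolev embedding---is the same as the paper's. The difference is organisational: the paper first rescales to $r=1$, so that every derivative of the cutoffs is $O_{\theta}(1)$ and Theorem~\ref{thm:globe-lpl2} is applied only at unit scale; the sharp $r$-powers then come out automatically from undoing the parabolic dilation $\bm{u}(rx,r^{2}t)$. You instead keep $r$ general and try to track the gap weight $(\rho'-\rho)^{-1}$ through a chain of cylinders.

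This is where a genuine problem appears. Your claimed one-step Caccioppoli bound with weight $(\rho'-\rho)^{-1}$ is \emph{not} delivered by Theorem~\ref{thm:globe-lpl2}. After your divergence trick $(\partial_{i}\zeta)\partial_{j}v=\partial_{j}[(\partial_{i}\zeta)v]-(\partial_{ij}\zeta)v$, the commutator no longer contains $\partial v$ at all, so there is no ``genuinely top-order remainder'' to absorb; the $\varepsilon$-absorption you invoke is a red herring. What does remain are the zero-order pieces $(\partial_{ij}\zeta)v$ and, crucially, $(\partial_{t}\zeta)v$, whose coefficients are of size $(\rho'-\rho)^{-2}$, and the $H^{-1}_{x}$ norm of a compactly supported function is in general no better than its $L^{2}_{x}$ norm. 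Hence the one-step estimate obtained from \eqref{eq:LPL2-est} carries the weight $(\rho'-\rho)^{-2}$, and telescoping it over $m{+}1$ gaps of size comparable to $r$ produces $r^{-2(m+1)}$ in front of $\|\bm{u}\|_{L_{\om}^{p}L_{t}^{2}L_{x}^{2}(Q_{r})}$, not the claimed $r^{-(m+1)}$. The paper's rescaling device is precisely what sidesteps this loss: at unit scale the cutoff constants depend only on $\theta$, and the correct homogeneity is restored by scaling back.
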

\begin{proof}
It suffices to prove (\ref{eq:locallpl2-est-1-1}) as (\ref{eq:locallpl2-est-2-1})
follows from (\ref{eq:locallpl2-est-1-1}) immediately by Sobolev's
embedding theorem \cite[Theorem~4.12]{adams2003sobolev}. 
Moreover for general $r>0$, we can apply the obtained estimates for $r=1$ to the rescaled function
\[
{\bm{v}}(x,t):={\bm{u}}(rx,r^{2}t),\quad\forall(x,t)\in\R^{d}\times\R
\]
which solves the equation 
\begin{equation}
\md v^{\alpha}(x,t)=\big(a_{\alpha\beta}^{ij}(r^{2}t)\partial_{ij}v^{\beta}(x,t)+F_{\alpha}\big)\md t+\big(\sigma_{\alpha\beta}^{ik}(r^{2}t)\partial_{i}v^{\beta}(x,t)+G_{\alpha}^{k}\big)\vd\beta_{t}^{k},\label{eq:model-1-1}
\end{equation}
with 
\[
F_{\alpha}(x,t)=r^{2}f_{\alpha}(rx,r^{2}t),\quad G_{\alpha}^{k}(x,t)=rg(rx,r^{2}t),\quad\beta_{t}^{k}=r^{-1}\BM_{r^{2}t}^{k}.
\]
Obviously, $\beta^{k}$ are mutually independent standard Wiener processes.

For any $\theta\in(0,1)$, choose cut-off functions $\zeta^{\ell}\in C_{0}^{\infty}(\R^{d+1})$,
$\ell=1,2$, satisfying i) $0\leq\zeta^{\ell}\leq1$, ii) $\zeta^{1}=1$
in $Q_{\sqrt{\theta}}$ and $\zeta^{1}=0$ outside $Q_{1}$, and iii)
$\zeta^{2}=1$ in $Q_{\theta}$ and $\zeta^{2}=0$ outside $Q_{\sqrt{\theta}}$.
Then ${\bm{v}}_{\ell}=\zeta^{\ell}{\bm{u}}$ ($\ell=1,2$) satisfy
\begin{equation}
\md v_{l}^{\alpha}=\big(a_{\alpha\beta}^{ij}\partial_{ij}v_{\ell}^{\beta}+\tilde{f}_{\ell,\alpha}\big)\md t+\big(\sigma_{\alpha\beta}^{ik}\partial_{i}v_{\ell}^{\beta}+\tilde{g}_{\ell,\alpha}^{k}\big)\vd\BM_{t}^{k},\quad\ell=1,2,\label{eq:Loc-lpl2-1-1}
\end{equation}
where 
\begin{align*}
\tilde{f}_{\ell,\alpha} & =\zeta^{\ell}f_{\al}-a_{\alpha\beta}^{ij}(\zeta_{x_{i}}^{\ell}u^{\beta})_{x_{j}}+a_{\alpha\beta}^{ij}\zeta_{x_{i}x_{j}}^{\ell}u^{\beta}+(\pd_{t}\zeta^{\ell})u^{\alpha},\\
\tilde{g}_{\ell,\alpha}^{k} & =\zeta^{\ell}g_{\alpha}^{k}-\sigma_{\alpha\beta}^{ik}\zeta_{x_{i}}^{\ell}u^{\beta},\quad\ell=1,2.
\end{align*}
Applying Theorem \ref{thm:globe-lpl2} to (\ref{eq:Loc-lpl2-1-1})
for $\ell=1$, $\mathfrak{s}=0$ and for $\ell=2$, $|\mathfrak{s}|=1$,
we have 
\begin{align*}
 & \|{\bm{u}}\|_{L_{\om}^{p}L_{t}^{\infty}L_{x}^{2}(Q_{\sqrt{\theta}})}+\|{\bm{u}}_{x}\|_{L_{\om}^{p}L_{t}^{2}L_{x}^{2}(Q_{\sqrt{\theta}})}\\
 & \,\,\leq C\Big(\|{\bm{u}}\|_{L_{\om}^{p}L_{t}^{2}L_{x}^{2}(Q_{1})}+\|{\bm{f}}\|_{L_{\om}^{p}L_{t}^{2}L_{x}^{2}(Q_{1})}+\|{\bm{g}}\|_{L_{\om}^{p}L_{t}^{2}L_{x}^{2}(Q_{1})}\Big);\\
 & \|\pd^{\mathfrak{s}}{\bm{u}}\|_{L_{\om}^{p}L_{t}^{\infty}L_{x}^{2}(Q_{\theta})}+\|\pd^{\mathfrak{s}}{\bm{u}}_{x}\|_{L_{\om}^{p}L_{t}^{2}L_{x}^{2}(Q_{\theta})}\\
 & \,\,\leq C\Big(\|{\bm{u}}\|_{L_{\om}^{p}L_{t}^{2}L_{x}^{2}(Q_{\sqrt{\theta}})}+\|\pd^{\mathfrak{s}}{\bm{u}}\|_{L_{\om}^{p}L_{t}^{2}L_{x}^{2}(Q_{\sqrt{\theta}})}\|+\|{\bm{f}}\|_{L_{\om}^{p}L_{t}^{2}L_{x}^{2}(Q_{\sqrt{\theta}})}+\|\pd^{\mathfrak{s}}{\bm{g}}\|_{L_{\om}^{p}L_{t}^{2}L_{x}^{2}(Q_{\sqrt{\theta}})}\Big).
\end{align*}
Combining these two estimates, we have \eqref{eq:locallpl2-est-1-1}
for $m=1$. Higher order estimates follows from induction. The proof
is complete. 
\end{proof}
Another consequence of Theorem \ref{thm:globe-lpl2} is the following
lemma concerning the estimates for equation \eqref{eq:model} with
the Cauchy\textendash Dirichlet boundary conditions:
\begin{equation}
\bigg\{\begin{aligned}\bm{u}(x,0) & =0,\quad\forall\,x\in B_{r};\\
\bm{u}(x,t) & =0,\quad\forall\,(x,t)\in\partial B_{r}\times(0,T].
\end{aligned}
\label{eq:dirich}
\end{equation}

\begin{prop}
\label{lem:dirich}Let $\bm{f}=\bm{f}^{0}+\pd_{i}\bm{f}^{i}$ and
$\bm{f}^{0},\bm{f}^{1},\dots,\bm{f}^{d},\bm{g}\in L^p_\om L^2_t H^m_{x}(\mathcal{Q}_{r,r^2})$ for all $m\ge 0$.
Then problem \eqref{eq:model} and \eqref{eq:dirich} admits a unique
solution $\bm{u}\in L_{\om}^{2}L_{t}^{2}H_{x}^{1}(\mathcal{Q}_{r,r^2})$,
and for each $t\in(0,r^2)$, $\bm{u}(\cdot,t)\in L^{p}(\PS;C^{m}(B_{\eps};\R^{N}))$
with any $m\ge0$ and $\eps\in(0,r)$. Moreover, there is a constant
$C=C(n,p)$ such that
\begin{equation}
\|\bm{u}\|_{L_{\om}^{p}L_{t}^{2}L_{x}^{2}(\mathcal{Q}_{r,r^{2}})}\leq C\Big(r^{2}\|\bm{f}^{0}\|_{L_{\om}^{p}L_{t}^{2}L_{x}^{2}(\mathcal{Q}_{r,r^{2}})}+r\|(\bm{f}^{1},\dots,\bm{f}^{d},\bm{g})\|_{L_{\om}^{p}L_{t}^{2}L_{x}^{2}(\mathcal{Q}_{r,r^{2}})}\Big).\label{eq:dirich-est}
\end{equation}
\end{prop}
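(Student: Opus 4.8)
The plan is to reduce everything to the unit scale $r=1$ by the parabolic dilation $\bm v(x,t)=\bm u(rx,r^2t)$ already used in the proof of Proposition~\ref{lem:local-LPL2-1}: this turns \eqref{eq:model}--\eqref{eq:dirich} on $\mathcal Q_{r,r^2}$ into the same type of Cauchy--Dirichlet problem on $\mathcal Q_{1,1}$ with $x$-independent coefficients obeying the same MSP condition, driven by the rescaled Wiener processes $\beta^k_t=r^{-1}\BM^k_{r^2t}$, and with free terms $\bm F^0=r^2\bm f^0(r\cdot,r^2\cdot)$, $\bm F^i=r\bm f^i(r\cdot,r^2\cdot)$ and $\bm G=r\bm g(r\cdot,r^2\cdot)$, so that $\bm F=\bm F^0+\pd_i\bm F^i$. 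Tracking the homogeneity of each $L^p_\om L^2_tL^2_x$-norm under this dilation, one checks that the powers $r^2$ and $r$ in \eqref{eq:dirich-est} are exactly the ones produced by this substitution; hence it suffices to prove the estimate and the solvability statements for $r=1$.

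First I would settle existence and uniqueness of the weak solution $\bm u\in L^2_\om L^2_tH^1_x$. Since the coefficients depend only on $(t,\om)$, I would work in the Gelfand triple $H^1_0(B_1)\hookrightarrow L^2(B_1)\hookrightarrow H^{-1}(B_1)$ and invoke the standard variational (monotonicity) theory for linear SPDEs, whose only nontrivial input is coercivity. Observe that the MSP condition \eqref{eq:lp-condition} implies the $p=2$ form \eqref{eq:p2} with the same $\kappa$, because $p-2\ge0$ and the subtracted term is a nonnegative Legendre--Hadamard form. Extending a function of $H^1_0(B_1)$ by zero and applying Plancherel, \eqref{eq:p2} then yields $\int_{B_1}\bigl(2a^{ij}_{\alpha\beta}-\sigma^{ik}_{\gamma\alpha}\sigma^{jk}_{\gamma\beta}\bigr)\pd_i\bm u^\alpha\pd_j\bm u^\beta\ge\kappa\|\pd\bm u\|_{L^2}^2$ with no lower-order G\aa rding defect, matching the It\^o correction exactly and giving a unique $L^2_\om L^2_tH^1_x$ solution. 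The interior regularity claim then follows by bootstrap: on any $B_\eps$ with $\eps<1$ the solution satisfies \eqref{eq:model} locally, so the interior estimates of Proposition~\ref{lem:local-LPL2-1} apply; since $\bm f^0,\dots,\bm f^d,\bm g\in L^p_\om L^2_tH^m_x$ for every $m$, iterating raises the spatial regularity to all orders, and Sobolev embedding in $x$ upgrades this to $\bm u(\cdot,t)\in L^{p}(\PS;C^{m}(B_\eps;\R^{N}))$ for each $t$ after passing to the time-continuous modification.

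The core is estimate \eqref{eq:dirich-est} at $r=1$, which I would obtain by running the energy argument at the $p$-th moment. Applying It\^o's formula to $Y_t:=\|\bm u(t)\|_{L^2(B_1)}^2$ and integrating the second-order term by parts produces the quadratic form $-\bigl(2a^{ij}_{\alpha\beta}-\sigma^{ik}_{\gamma\alpha}\sigma^{jk}_{\gamma\beta}\bigr)\langle\pd_i\bm u^\alpha,\pd_j\bm u^\beta\rangle$ together with the pairings $2\langle\bm u,\bm f^0\rangle-2\langle\pd_i\bm u,\bm f^i\rangle+2\langle\sigma^{ik}_{\alpha\beta}\pd_i\bm u^\beta,\bm g^k_\alpha\rangle+\|\bm g\|^2$. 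Here the zero Dirichlet condition is decisive: the part of $\langle\bm u^\alpha,\sigma^{ik}_{\alpha\beta}\pd_i\bm u^\beta\rangle$ that is symmetric in $(\alpha,\beta)$ is a perfect spatial divergence and hence integrates to a vanishing boundary term, which is precisely what lets the gauge $\lambda^{ik}_{\alpha\beta}$ in \eqref{eq:Aij} be exploited. I would then apply It\^o's formula to $Y_t^{p/2}$ and take expectations: combining the quadratic variation $\md\langle M\rangle$ of the martingale part with the $(p-2)$-weighted correction reconstructs the MSP form $\mathcal A^{ij}_{\alpha\beta}(p,\lambda)$, so that \eqref{eq:lp-condition} forces the net coefficient of $\|\pd\bm u\|^2$ to be strictly negative; the cross terms with $\bm f^0,\bm f^i,\bm g$ are absorbed by Young's inequality, using the Poincar\'e inequality $\|\bm u\|_{L^2}\le C\|\pd\bm u\|_{L^2}$ on $B_1$ to trade the undifferentiated source $\bm f^0$ for a gradient pairing (the mechanism behind its weight $r^2$ under rescaling). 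A Gronwall and Burkholder--Davis--Gundy step over $(0,1)$ then gives $\E\sup_t Y_t^{p/2}+\E\bigl(\int_0^1\|\pd\bm u\|^2\bigr)^{p/2}\le C\,\E\bigl(\int_0^1(\|\bm f^0\|^2+\|\bm f^i\|^2+\|\bm g\|^2)\bigr)^{p/2}$, and since $\|\bm u\|_{L^2_tL^2_x}\le\sup_t Y_t^{1/2}$ this yields \eqref{eq:dirich-est}.

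I expect the main obstacle to be the $p>2$ step, namely controlling the correction term $\tfrac{p}{2}\bigl(\tfrac p2-1\bigr)Y^{p/2-2}\,\md\langle M\rangle$ produced by It\^o's formula on $Y_t^{p/2}$. Reducing this term to the pointwise MSP form \eqref{eq:lp-condition} with the sharp coefficient $p-2$ demands the same careful accounting of the martingale's quadratic variation as in the proof of Theorem~\ref{thm:globe-lpl2}, including the optimal choice of the symmetric gauge $\lambda^{ik}_{\alpha\beta}$; Example~\ref{exa:counterexam} shows that any smaller coefficient would defeat the argument. A minor secondary point is the rigorous justification of the It\^o computations, which I would carry out on smooth Galerkin approximations (legitimate since the data are smooth for every $m$) and then pass to the limit using the uniform bounds just obtained.
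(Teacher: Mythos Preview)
Your proposal is correct and follows essentially the same route as the paper: reduce to $r=1$ by parabolic dilation, and derive \eqref{eq:dirich-est} by rerunning the $p$-th moment energy argument of Theorem~\ref{thm:globe-lpl2} on $B_1$ with zero Dirichlet data (where the boundary terms vanish and Poincar\'e supplies the missing gradient control). The only difference is one of presentation rather than substance: the paper dispatches existence, uniqueness, and interior smoothness in one line by citing \cite[Theorem~4.8]{Kim2013w} (noting via Lemma~\ref{lem:LH} that Kim's results, stated under the stronger condition~\eqref{eq:parabolic}, remain valid for the model system under the Legendre--Hadamard form~\eqref{eq:p2} because the coefficients are $x$-independent), and then says \eqref{eq:dirich-est} ``follows from \eqref{eq:LPL2-est} and rescaling''; you instead reprove the well-posedness via the variational framework and obtain interior regularity by bootstrapping Proposition~\ref{lem:local-LPL2-1}. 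Your version is more self-contained, while the paper's is shorter; the analytic content---the MSP-based coercivity and the scaling that produces the weights $r^2$ and $r$---is identical in both.
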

\begin{proof}
The existence, uniqueness and smoothness of the solution of problem
\eqref{eq:model} and \eqref{eq:dirich} follow from \cite[Theorem 4.8]{Kim2013w},
and \eqref{eq:dirich-est} from \eqref{eq:LPL2-est} and rescaling.
We remark that, although the results in \cite{Kim2013w} used condition
\eqref{eq:parabolic}, Lemma \ref{lem:LH} below ensures that those
results remain valid for the model equation \eqref{eq:model} under
condition \eqref{eq:p2} that is implied by the MSP condition.
\end{proof}
The following lemma is standard (cf. \cite{giaquinta1993introduction}).
\begin{lem}
\label{lem:LH}If the real numbers $A_{\alpha\beta}^{ij}$ satisfy
the Legendre\textendash Hadamard condition, then there exists a constant
$\epsilon>0$ depending only on $d,N$ and $\kappa$ such that
\[
\int_{\R^{d}}A_{\alpha\beta}^{ij}\pd_{i}u^{\alpha}\pd_{j}u^{\beta}\ge\epsilon\int_{\R^{d}}|\pd\bm{u}|^{2}
\]
for any $\bm{u}\in H^{1}(\R^{d};\R^{N})$. 
\end{lem}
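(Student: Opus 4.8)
The plan is to move to the frequency side via the Fourier transform, where the quadratic form reduces to a pointwise algebraic inequality that is essentially the hypothesis. Since $\bm{u}\in H^{1}(\R^{d};\R^{N})$, the weak derivatives $\pd_{i}u^{\alpha}$ lie in $L^{2}$ and satisfy $\widehat{\pd_{i}u^{\alpha}}=\sqrt{-1}\,\xi_{i}\widehat{u}^{\alpha}$, so Parseval's identity (up to a fixed normalising constant $c_{d}$ that cancels on both sides below) gives
\[
\int_{\R^{d}}A_{\alpha\beta}^{ij}\,\pd_{i}u^{\alpha}\,\pd_{j}u^{\beta}\vd x=c_{d}\int_{\R^{d}}A_{\alpha\beta}^{ij}\,\xi_{i}\xi_{j}\,\widehat{u}^{\alpha}(\xi)\,\overline{\widehat{u}^{\beta}(\xi)}\vd\xi,
\]
where I have used that each $u^{\alpha}$ is real-valued. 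As the left-hand side is real, it equals $c_{d}$ times the integral of the real part of the integrand.

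The one point requiring care is that $\widehat{u}(\xi)$ is a \emph{complex} vector in $\mathbb{C}^{N}$, while the Legendre--Hadamard condition $A_{\alpha\beta}^{ij}\xi_{i}\xi_{j}\eta^{\alpha}\eta^{\beta}\ge\kappa|\xi|^{2}|\eta|^{2}$ is assumed only for real $\eta$. To bridge this I would decompose $\widehat{u}(\xi)=a+\sqrt{-1}\,b$ with $a,b\in\R^{N}$ (both depending on $\xi$). Because $A_{\alpha\beta}^{ij}$ and $\xi$ are real, expanding and discarding the purely imaginary cross terms yields the pointwise identity
\[
\operatorname{Re}\bigl(A_{\alpha\beta}^{ij}\,\xi_{i}\xi_{j}\,\widehat{u}^{\alpha}\,\overline{\widehat{u}^{\beta}}\bigr)=A_{\alpha\beta}^{ij}\,\xi_{i}\xi_{j}\,\bigl(a^{\alpha}a^{\beta}+b^{\alpha}b^{\beta}\bigr).
\]
Applying the Legendre--Hadamard inequality separately with $\eta=a$ and $\eta=b$ then bounds this below by $\kappa|\xi|^{2}\bigl(|a|^{2}+|b|^{2}\bigr)=\kappa|\xi|^{2}|\widehat{u}(\xi)|^{2}$ for every $\xi$.

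Finally I would integrate this pointwise bound in $\xi$ and apply Parseval once more, using $c_{d}\int_{\R^{d}}|\xi|^{2}|\widehat{u}(\xi)|^{2}\vd\xi=\int_{\R^{d}}|\pd\bm{u}|^{2}\vd x$, to obtain
\[
\int_{\R^{d}}A_{\alpha\beta}^{ij}\,\pd_{i}u^{\alpha}\,\pd_{j}u^{\beta}\vd x\ge\kappa\int_{\R^{d}}|\pd\bm{u}|^{2}\vd x,
\]
which is the claim with $\epsilon=\kappa$; in particular the argument yields a constant depending only on $\kappa$ (a fortiori on $d,N,\kappa$ as stated). I do not expect a genuine obstacle here: the entire content is the complexification step above, where one must verify that the imaginary contribution drops out upon taking real parts---which is automatic, since the original space integral is real.
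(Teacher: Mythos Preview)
Your argument is correct and is exactly the standard Fourier-transform proof: the paper does not give its own argument but simply cites \cite{giaquinta1993introduction}, where precisely this Plancherel computation (with the same real/imaginary decomposition of $\widehat{\bm u}$ to handle complex $\eta$) is carried out. Your observation that one may take $\epsilon=\kappa$ is also consistent with that reference.
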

The rest of this section is devoted to the proof of Theorem \ref{thm:globe-lpl2}.
\begin{proof}[Proof of Theorem \ref{thm:globe-lpl2}]
According to Theorem 2.3 in \cite{Kim2013w} the model system (\ref{eq:model})
with zero initial value admits a unique solution 
\[
{\bm{u}}\in L_{\om}^{2}L_{t}^{\infty}H_{x}^{m}(\mathcal{Q}_{T})\cap L_{\om}^{2}L_{t}^{2}H_{x}^{m+1}(\mathcal{Q}_{T}).
\]
Noting that $\bm{u}\in L_{\om}^{p}L_{t}^{\infty}H_{x}^{m}(\mathcal{Q}_{T})\cap L_{\om}^{p}L_{t}^{2}H_{x}^{m+1}(\mathcal{Q}_{T})$ follows from estimate (\ref{eq:LPL2-est}) by approximation,
 it remains to prove (\ref{eq:LPL2-est}). As we can differentiate
(\ref{eq:model}) with order $\mathfrak{s}$, it suffices to show
(\ref{eq:LPL2-est}) for $m=0$.

By It\^{o}'s formula, we derive
\begin{align}
\md|\bm{u}|^{2} & =\big[-\big(2a_{\al\be}^{ij}-\sigma_{\gamma\alpha}^{ik}\sigma_{\gamma\beta}^{jk}\big)\pd_{i}u^{\al}\pd_{j}u^{\be}+2a_{\al\be}^{ij}\pd_{i}\big(u^{\al}\pd_{j}u^{\be}\big)\big]\vd t\\
 & \quad+\big(2u^{\al}f_{\al}+2\si_{\al\be}^{ik}\pd_{i}u^{\be}g_{\al}^{k}+|\bm{g}|^{2}\big)\vd t+2\big(\si_{\al\be}^{ik}u^{\al}\pd_{i}u^{\be}+u^{\al}g_{\al}^{k}\big)\vd\BM_{t}^{k}.\nonumber 
\end{align}
Integrating with respect to $x$ over $\R^{d}$ and using the divergence
theorem, we have 
\begin{align}
 & \md\|\bm{u}(\cdot,t)\|_{L_{x}^{2}}^{2}\label{eq:l2-ito-2}\\
 & =\int_{\R^{d}}\big[-\big(2a_{\al\be}^{ij}-\sigma_{\gamma\alpha}^{ik}\sigma_{\gamma\beta}^{jk}\big)\pd_{i}u^{\al}\pd_{j}u^{\be}+2u^{\al}f_{\al}+2\si_{\al\be}^{ik}\pd_{i}u^{\be}g_{\al}^{k}+|\bm{g}|^{2}\big]\vd x\md t\nonumber \\
 & \quad+\int_{\R^{d}}2\big(\sigma_{\alpha\be}^{ik}u^{\al}\pd_{i}u^{\be}+u^{\al}g_{\al}^{k}\big)\vd x\vd\BM_{t}^{k}.\nonumber 
\end{align}
Applying It\^{o}'s formula to $\|\bm{u}(\cdot,t)\|_{L_{x}^{2}}^{p}$ gives
\begin{align*}
 & \md\|\bm{u}(\cdot,t)\|_{L_{x}^{2}}^{p}\\
 & =\frac{p}{2}\|\bm{u}\|_{L_{x}^{2}}^{p-2}\int_{\R^{d}}\big[-\big(2a_{\al\be}^{ij}-\sigma_{\gamma\alpha}^{ik}\sigma_{\gamma\beta}^{jk}\big)\pd_{i}u^{\al}\pd_{j}u^{\be}+2u^{\al}f_{\al}+2\si_{\al\be}^{ik}\pd_{i}u^{\be}g_{\al}^{k}+|\bm{g}|^{2}\big]\vd x\vd t\\
 & \quad+\frac{p(p-2)}{2}\bm{1}_{\{\|\bm{u}\|_{L_{x}^{2}}\neq0\}}\|\bm{u}\|_{L_{x}^{2}}^{p-4}\sum_{k}\text{\ensuremath{\bigg[}}\int_{\R^{d}}\big(\sigma_{\alpha\be}^{ik}u^{\al}\pd_{i}u^{\be}+u^{\al}g_{\al}^{k}\big)\vd x\bigg]^{2}\vd t\\
 & \quad+p\|\bm{u}\|_{L_{x}^{2}}^{p-2}\int_{\R^{d}}\big(\sigma_{\alpha\be}^{ik}u^{\al}\pd_{i}u^{\be}+u^{\al}g_{\al}^{k}\big)\vd x\vd\BM_{t}^{k}.
\end{align*}
Recalling the MSP condition for the definition of $\lambda_{\alpha\beta}^{ik}$
and that $\lambda_{\alpha\beta}^{ik}=\lambda_{\beta\alpha}^{ik}$,
we compute
\begin{align*}
\si_{\al\be}^{ik}u^{\al}\pd_{i}u^{\be} & =(\sigma_{\alpha\beta}^{ik}-\lambda_{\alpha\beta}^{ik})u^{\al}\pd_{i}u^{\be}+\lambda_{\al\be}^{ik}u^{\al}\pd_{i}u^{\be}\\
 & =(\sigma_{\alpha\beta}^{ik}-\lambda_{\alpha\beta}^{ik})u^{\al}\pd_{i}u^{\be}+\frac{1}{2}\lambda_{\al\be}^{ik}\pd_{i}(u^{\al}u^{\be}),
\end{align*}
so by the integration by parts,
\[
\int_{\R^{d}}\sigma_{\alpha\be}^{ik}u^{\al}\pd_{i}u^{\be}\vd x=\int_{\R^{d}}(\sigma_{\alpha\beta}^{ik}-\lambda_{\alpha\beta}^{ik})u^{\al}\pd_{i}u^{\be}\vd x.
\]
Using the MSP condition and Lemma \ref{lem:LH}, we can dominate
the highest order terms: 
\begin{align*}
 & -\|\bm{u}\|_{L_{x}^{2}}^{2}\int_{\R^{d}}\!\!\big(2a_{\al\be}^{ij}-\sigma_{\gamma\alpha}^{ik}\sigma_{\gamma\beta}^{jk}\big)\pd_{i}u^{\al}\pd_{j}u^{\be}\vd x+(p-2)\sum_{k}\bigg(\!\int_{\R^{d}}\!\!\sigma_{\alpha\be}^{ik}u^{\al}\pd_{i}u^{\be}\vd x\!\bigg)^{\!2}\\
\le & -\|\bm{u}\|_{L_{x}^{2}}^{2}\int_{\R^{d}}\big(2a_{\al\be}^{ij}-\sigma_{\gamma\alpha}^{ik}\sigma_{\gamma\beta}^{jk}\big)\pd_{i}u^{\al}\pd_{j}u^{\be}\vd x\\
 & \qquad+(p-2)\|\bm{u}\|_{L_{x}^{2}}^{2}\sum_{k,\gamma}\int_{\R^{d}}\big[(\sigma_{\gamma\beta}^{ik}-\lambda_{\gamma\beta}^{ik})\pd_{i}u^{\be}\big]^{2}\vd x\\
= & -\|\bm{u}\|_{L_{x}^{2}}^{2}\int_{\R^{d}}\bigl[2a_{\al\be}^{ij}-\sigma_{\gamma\alpha}^{ik}\sigma_{\gamma\beta}^{jk}-(p-2)(\sigma_{\gamma\alpha}^{ik}-\lambda_{\gamma\alpha}^{ik})(\sigma_{\gamma\beta}^{jk}-\lambda_{\gamma\beta}^{jk})\bigr]\pd_{i}u^{\al}\pd_{j}u^{\be}\vd x\\
\le & -\epsilon\|\bm{u}\|_{L_{x}^{2}}^{2}\|\pd\bm{u}\|_{L_{x}^{2}}^{2}.
\end{align*}
So we have 
\begin{align}
 & \md\|\bm{u}(\cdot,t)\|_{L_{x}^{2}}^{p}\label{eq:lp00}\\
 & \le\frac{p}{2}\|\bm{u}\|_{L_{x}^{2}}^{p-2}\bigl(-\epsilon\|\pd\bm{u}\|_{L_{x}^{2}}^{2}+2\|\bm{u}\|_{H_{x}^{1}}\|\bm{f}\|_{H_{x}^{-1}}+C\|\bm{g}\|_{L_{x}^{2}}^{2}+C\|\pd\bm{u}\|_{L_{x}^{2}}\|\bm{g}\|_{L_{x}^{2}}\bigr)\vd t\nonumber \\
 & \quad+p\|\bm{u}\|_{L_{x}^{2}}^{p-2}\int_{\R^{d}}\big(\text{\ensuremath{\sigma}}_{\al\be}^{ik}u^{\al}\pd_{i}u^{\be}+u^{\al}g_{\al}^{k}\big)\vd x\vd\BM_{t}^{k}\nonumber \\
 & \leq\Big[-\frac{p\epsilon}{4}\|\bm{u}\|_{L_{x}^{2}}^{p-2}\|\pd\bm{u}\|_{L_{x}^{2}}^{2}+C\|\bm{u}\|_{L_{x}^{2}}^{p}+C\|\bm{u}\|_{L_{x}^{2}}^{p-2}\big(\|\bm{f}\|_{H_{x}^{-1}}^{2}+\|\bm{g}\|_{L_{x}^{2}}^{2}\big)\Big]\vd t\nonumber \\
 & \quad+p\|\bm{u}\|_{L_{x}^{2}}^{p-2}\int_{\R^{d}}\big(\text{\ensuremath{\sigma}}_{\al\be}^{ik}u^{\al}\pd_{i}u^{\be}+u^{\al}g_{\al}^{k}\big)\vd x\vd\BM_{t}^{k}.\nonumber 
\end{align}
Integrating with respect to time on $[0,s]$ for any $s\in[0,T]$,
and keeping in mind the initial condition $\bm{u}(x,0)\equiv0$, we
know that 
\begin{align}
 & \|\bm{u}(s)\|_{L_{x}^{2}}^{p}+\frac{p\epsilon}{4}\int_{0}^{s}\|\bm{u}\|_{L_{x}^{2}}^{p-2}\|\pd\bm{u}\|_{L_{x}^{2}}^{2}\vd t\nonumber \\
 & \le C\int_{0}^{s}\Big[\|\bm{u}(t)\|_{L_{x}^{2}}^{p}+\|\bm{u}\|_{L_{x}^{2}}^{p-2}\big(\|\bm{f}\|_{H_{x}^{-1}}^{2}+\|\bm{g}\|_{L_{x}^{2}}^{2}\big)\Big]\vd t\label{eq:lpl2-1}\\
 & \quad+\int_{0}^{s}p\|\bm{u}\|_{L_{x}^{2}}^{p-2}\int_{\R^{d}}\big[\text{\ensuremath{\sigma}}_{\al\be}^{ik}u^{\al}\pd_{i}u^{\be}+u^{\al}g_{\al}^{k}\big]\vd x\vd\BM_{t}^{k},\quad\text{a.s.}\nonumber 
\end{align}
Let $\tau\in[0,T]$ be  a stopping time  such that 
\[
\E\sup_{t\in[0,\tau]}\|\bm{u}(t)\|_{L_{x}^{2}}^{p}+\E\biggl(\int_{0}^{\tau}\|\pd\bm{u}(t)\|_{L_{x}^{2}}^{2}\vd t\biggr)^{\frac{p}{2}}<\infty.
\]
Then it is easily verified that the last term on the right-hand side
of (\ref{eq:lpl2-1}) is a martingale with parameter $s$. Taking
the expectation on both sides of (\ref{eq:lpl2-1}), and by Young's
inequality and Gronwall's inequality, we can obtain that 
\begin{align}
\sup_{t\in[0,T]}\E\|{\bm{u}}(t\wedge\tau)\|_{L_{x}^{2}}^{p} & +\E\int_{0}^{\tau}\|{\bm{u}}(t)\|_{L_{x}^{2}}^{p-2}\|\pd{\bm{u}}(t)\|_{L_{x}^{2}}^{2}\vd t\label{eq:Lpl2-2}\\
 & \le C\E\int_{0}^{\tau}\|{\bm{u}}(t)\|_{L_{x}^{2}}^{p-2}\big(\|{\bm{f}}\|_{H_{x}^{-1}}^{2}+\|{\bm{g}}\|_{L_{x}^{2}}^{2}\big)\vd t.\nonumber 
\end{align}
On the other hand, by the Burkholder--Davis--Gundy
(BDG) inequality (cf. \cite{revuz1999continuous}), we can derive
from (\ref{eq:lpl2-1}) that 
\begin{align}
 & \E\sup_{t\in[0,\tau]}\|{\bm{u}}(t)\|_{L_{x}^{2}}^{p}+\E\int_{0}^{\tau}\|{\bm{u}}(t)\|_{L_{x}^{2}}^{p-2}\|\pd{\bm{u}}(t)\|_{L_{x}^{2}}^{2}\vd t\nonumber \\
\le\, & C\E\int_{0}^{\tau}\Big[\|{\bm{u}}(t)\|_{L_{x}^{2}}^{p}+\|{\bm{u}}(t)\|_{L_{x}^{2}}^{p-2}\big(\|{\bm{f}}\|_{H_{x}^{-1}}^{2}+\|{\bm{g}}\|_{L_{x}^{2}}^{2}\big)\Big]\vd t\label{eq:LPL2-3}\\
 & +C\E\bigg\{\int_{0}^{\tau}\|{\bm{u}}\|_{L_{x}^{2}}^{2(p-2)}\sum_{k}\bigg[\int_{\R^{d}}\big(\sigma_{\al\be}^{ik}u^{\al}\pd_{i}u^{\be}+u^{\al}g_{\al}^{k}\big)\vd x\bigg]^{2}\vd t\bigg\}^{\frac{1}{2}},\nonumber 
\end{align}
and by H\"{o}lder's inequality, the last term on the right-hand side of
the above inequality is dominated by 
\begin{align*}
 & C\,\E\bigg[\int_{0}^{\tau}\|{\bm{u}}\|_{L_{x}^{2}}^{2(p-2)}\big(\|{\bm{u}}\|_{L_{x}^{2}}^{2}\|\pd{\bm{u}}\|_{L_{x}^{2}}^{2}+\|{\bm{u}}\|_{L_{x}^{2}}^{2}\|\bm{g}\|_{L_{x}^{2}}^{2}\big)\vd t\bigg]^{\frac{1}{2}}\\
 & \le C\,\E\bigg\{\sup_{t\in[0,\tau]}\|{\bm{u}}(t)\|_{L_{x}^{2}}^{p/2}\bigg[\int_{0}^{\tau}\big(\|{\bm{u}}\|_{L_{x}^{2}}^{p-2}\|\pd{\bm{u}}\|_{L_{x}^{2}}^{2}+\|{\bm{u}}\|_{L_{x}^{2}}^{p-2}\|{\bm{g}}\|_{L_{x}^{2}}^{2}\big)\vd t\bigg]^{\frac{1}{2}}\bigg\}\\
 & \le\frac{1}{2}\,\E\sup_{t\in[0,\tau]}\|{\bm{u}}(t)\|_{L_{x}^{2}}^{p}+C\E\int_{0}^{\tau}\|{\bm{u}}\|_{L_{x}^{2}}^{p-2}\|\pd{\bm{u}}\|_{L_{x}^{2}}^{2}\vd t+C\int_{0}^{\tau}\|{\bm{u}}\|_{L_{x}^{2}}^{p-2}\|{\bm{g}}\|_{L_{x}^{2}}^{2}\vd t,
\end{align*}
which along with (\ref{eq:Lpl2-2}) and (\ref{eq:LPL2-3}) yields
that 
\begin{align*}
\E\sup_{t\in[0,\tau]}\|{\bm{u}}(t)\|_{L_{x}^{2}}^{p} & \leq C\E\int_{0}^{\tau}\|{\bm{u}}(t)\|_{L_{x}^{2}}^{p-2}\big(\|{\bm{f}}\|_{H_{x}^{-1}}^{2}+\|{\bm{g}}\|_{L_{x}^{2}}^{2}\big)\vd t\\
 & \leq C\E\bigg[\sup_{t\in[0,\tau]}\|{\bm{u}}(t)\|_{L_{x}^{2}}^{p-2}\int_{0}^{\tau}\big(\|{\bm{f}}\|_{H_{x}^{-1}}^{2}+\|{\bm{g}}\|_{L_{x}^{2}}^{2}\big)\vd t\bigg]\\
 & \leq\frac{1}{2}\E\sup_{t\in[0,\tau]}\|{\bm{u}}(t)\|_{L_{x}^{2}}^{p}+C\E\bigg[\int_{0}^{T}\big(\|{\bm{f}}\|_{H_{x}^{-1}}^{2}+\|{\bm{g}}\|_{L_{x}^{2}}^{2}\big)\vd t\bigg]^{\frac{p}{2}}.
\end{align*}
Thus we gain the estimate 
\begin{equation}
\frac{1}{C}\,\E\sup_{t\in[0,\tau]}\|{\bm{u}}(t)\|_{L_{x}^{2}}^{p}\leq\E\bigg[\int_{0}^{T}\big(\|{\bm{f}}\|_{H_{x}^{-1}}^{2}+\|{\bm{g}}\|_{L_{x}^{2}}^{2}\big)\vd t\bigg]^{\frac{p}{2}}=:F\label{eq:LPL2-4}
\end{equation}
with $C=C(d,\kappa,K,p,T)$.

In order to estimate $\|\pd\bm{u}_{x}\|_{L_{\om}^{p}L_{t}^{2}L_{x}^{2}}$,
we go back to (\ref{eq:l2-ito-2}). Bearing in mind Condition (\ref{eq:lp-condition})
(actually here we only need the weaker one (\ref{eq:p2}))
we can easily get that 
\begin{align*}
\|{\bm{u}}(\tau)\|_{L_{x}^{2}}^{2}+\epsilon\int_{0}^{\tau}\|\pd{\bm{u}}(t)\|_{L_{x}^{2}}^{2}\vd t & \le\int_{0}^{\tau}\int_{\R^{d}}\big(2u^{\al}f_{\al}+2\si_{\al\be}^{ik}\pd_{i}u^{\be}g_{\al}^{k}+|{\bm{g}}|^{2}\big)\vd x\vd t\\
 & \quad+\int_{0}^{\tau}\int_{\R^{d}}2\big(\sigma_{\al\be}^{ik}u^{\al}\pd_{i}u^{\be}+u^{\al}g_{\al}^{k}\big)\vd x\vd\BM_{t}^{k},
\end{align*}
where $\epsilon$ is the constant in Lemma \ref{lem:LH}.
Computing $\E[\,\cdot\,]^{p/2}$ on both sides of the above inequality
and by H\"{o}lder's inequality and the BDG inequality, we derive that
\begin{align*}
 & \E\biggl(\int_{0}^{\tau}\|\pd{\bm{u}}(t)\|_{L_{x}^{2}}^{2}\vd t\biggr)^{\!\frac{p}{2}}\\
\le\, & \frac{1}{4}\E\biggl(\int_{0}^{\tau}\|{\bm{u}}(t)\|_{H_{x}^{1}}^{2}\vd t\biggr)^{\!\frac{p}{2}}+CF+C\E\bigg|\int_{0}^{\tau}\int_{\R^{d}}\big(\sigma_{\al\be}^{ik}u^{\al}\pd_{i}u^{\be}+u^{\al}g_{\al}^{k}\big)\vd x\vd\BM_{t}^{k}\bigg|^{\frac{p}{2}}\\
\le\, & \frac{1}{4}\E\biggl(\int_{0}^{\tau}\|{\bm{u}}(t)\|_{H_{x}^{1}}^{2}\vd t\biggr)^{\!\frac{p}{2}}+CF+C\E\bigg[\sum_{k}\int_{0}^{\tau}\bigg\{\int_{\R^{d}}\big(\sigma_{\al\be}^{ik}u^{\al}\pd_{i}u^{\be}+u^{\al}g_{\al}^{k}\big)\vd x\bigg\}^{\!2}\!\md t\bigg]^{\!\frac{p}{4}}\\
\le\, & \frac{1}{4}\E\biggl(\int_{0}^{\tau}\|{\bm{u}}(t)\|_{H_{x}^{1}}^{2}\vd t\biggr)^{\!\frac{p}{2}}+CF+C\,\E\bigg[\int_{0}^{\tau}\|{\bm{u}}(t)\|_{L_{x}^{2}}^{2}\big(\|\pd{\bm{u}}(t)\|_{L_{x}^{2}}^{2}+\|{\bm{g}}(t)\|_{L_{x}^{2}}^{2}\big)\vd t\bigg]^{\!\frac{p}{4}}\\
\le\, & \frac{1}{2}\E\biggl(\int_{0}^{\tau}\|\pd{\bm{u}}(t)\|_{L_{x}^{2}}^{2}\vd t\biggr)^{\!\frac{p}{2}}+C\,\E\!\sup_{t\in[0,\tau]}\!\|{\bm{u}}(t)\|_{L_{x}^{2}}^{p}+CF.
\end{align*}
which along with (\ref{eq:LPL2-4}) implies 
\[
\E\sup_{t\in[0,\tau]}\|{\bm{u}}(t)\|_{L_{x}^{2}}^{p}+\E\biggl(\int_{0}^{\tau}\|\pd{\bm{u}}(t)\|_{L_{x}^{2}}^{2}\vd t\biggr)^{\frac{p}{2}}\le CF,
\]
where the constant $C$ depends only on $d,p,T,\kappa,$ and $K$,
but is independent of $\tau$. Finally, we take the stopping time
$\tau$ to be 
\[
\tau_{n}:=\inf\bigg\{ s\ge0:\sup_{t\in[0,s]}\|{\bm{u}}(t)\|_{L_{x}^{2}}^{2}+\int_{0}^{s}\|\pd{\bm{u}}(t)\|_{L_{x}^{2}}^{2}\vd t\ge n\bigg\}\wedge T,
\]
and letting $n$ tend to infinity we obtain the estimate (\ref{eq:LPL2-est})
with $m=0$. Theorem \ref{thm:globe-lpl2} is proved. 
\end{proof}

\section{\label{sec:Interior-H=0000F6lder-estimates}Interior H\"{o}lder estimates
for the model system}

The aim of this section is to prove the interior H\"{o}lder estimates
for the model equation \eqref{eq:model}. The conditions \eqref{eq:lp-condition}
and \eqref{eq:modeq-coe-bound} are also assumed throughout this section.
Take $\bm{f}\in C_{x}^{0}(\R^{d}\times\R;L_{\omega}^{p})$ and $\bm{g}\in C_{x}^{1}(\R^{d}\times\R;L_{\omega}^{p})$
such that the modulus of continuity
\[
\vp(r):=\esssup_{t\in\R,\,|x-y|\le r}(\|\bm{f}(x,t)-\bm{f}(y,t)\|_{L_{\om}^{p}}+\|\pd\bm{g}(x,t)-\pd\bm{g}(y,t)\|_{L_{\om}^{p}})
\]
satisfies the Dini condition:
\[
\int_{0}^{1}\frac{\vp(r)}{r}\vd r<\infty.
\]

\begin{thm}
\label{thm:basic}Let $\bm{u}\in\cfun_{x,t}^{2,1}(Q_{2};L_{\om}^{p})$
satisfy \eqref{eq:model}. Under the above setting, there is a positive
constant $C$, depending only on $d,\kappa,$ and $p$, such that
for any $X,Y\in Q_{1/4}$,
\[
\|\pd^{2}\bm{u}(X)-\pd^{2}\bm{u}(Y)\|_{L_{\omega}^{p}}\le C\,\bigg[\Delta M+\int_{0}^{\Delta}\frac{\vp(r)}{r}\vd r+\Delta\int_{\Delta}^{1}\frac{\vp(r)}{r^{2}}\vd r\bigg],
\]
where $\Delta:=|X-Y|_{{\rm p}}$ and 
\[
M:=\|\bm{u}\|_{L_{\omega}^{p}L_{t}^{2}L_{x}^{2}(Q_{1})}+\bbr\bm{f}\bbr_{0,p;Q_1}+\bbr\bm{g}\bbr_{1,p;Q_{1}}.
\]
\end{thm}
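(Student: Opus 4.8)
The plan is to prove the estimate by a freezing-and-comparison (Campanato) scheme adapted to the stochastic setting, in which the correction term is always kept at the level of the $L_x^2L_t^2$-norm so that no spatial derivative of the merely Dini-continuous data is ever taken. By a routine mollification of $\bm f$ and $\bm g$ in the spatial variable, and using that the estimate to be proved involves the data only through $\vp$ and $M$, both of which are stable under mollification, I would first reduce to the case where $\bm f,\bm g$ are smooth in $x$; then $\bm u$ is spatially smooth by the regularity in Theorem~\ref{thm:globe-lpl2} and Proposition~\ref{lem:dirich}, and every auxiliary estimate below applies. At the end one lets the mollification parameter tend to $0$, the bound being uniform throughout.

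Fix a centre $X_0=(x_0,t_0)\in Q_{1/4}$ and a scale $\rho$. On $Q_\rho(X_0)$ I would freeze the free terms at $x_0$: replace $\bm f$ by the spatially constant $\bar{\bm f}(t)=\bm f(x_0,t)$ and $\bm g$ by its first-order spatial Taylor polynomial $\bar{\bm g}(x,t)=\bm g(x_0,t)+\pd_i\bm g(x_0,t)(x-x_0)_i$. Let $\bm w$ solve \eqref{eq:model} on $Q_\rho(X_0)$ with free terms $\bm f-\bar{\bm f}$ and $\bm g-\bar{\bm g}$ and zero Cauchy--Dirichlet data; by Proposition~\ref{lem:dirich} it exists and is spatially smooth, and since $\|\bm f-\bar{\bm f}\|_{L_\om^p}\le\vp(\rho)$ and $\|\bm g-\bar{\bm g}\|_{L_\om^p}\le\rho\,\vp(\rho)$ on $Q_\rho(X_0)$ (the Taylor remainder of a function whose gradient has modulus $\vp$), the estimate \eqref{eq:dirich-est} yields $\|\bm w\|_{L_\om^pL_t^2L_x^2(Q_\rho(X_0))}\le C\rho^{(d+6)/2}\vp(\rho)$. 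The crucial feature is that this bound is uniform in the mollification and invokes no derivative of the data. Setting $\bm v:=\bm u-\bm w$, the difference $\bm v$ solves \eqref{eq:model} on $Q_\rho(X_0)$ with the frozen free terms $\bar{\bm f},\bar{\bm g}$.

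Because $a,\sigma$ are independent of $x$ and $\bar{\bm f},\bar{\bm g}$ are respectively constant and affine in $x$, the second spatial derivatives $\pd^2\bm v$ satisfy the homogeneous version of \eqref{eq:model} (zero free term), as does $\pd^2\bm v-\bm P$ for any $t$-independent constant $\bm P$; hence Proposition~\ref{lem:local-LPL2-1} (the local mixed-norm estimate \eqref{eq:locallpl2-est-2-1} with Sobolev embedding) controls $\sup_{Q_{\rho/2}(X_0)}|\pd^3\bm v|$ and $\sup_{Q_{\rho/2}(X_0)}|\pd^4\bm v|$ by $\rho^{-2-d/2}$ and $\rho^{-3-d/2}$ times $\|\pd^2\bm v-\bm P\|_{L_\om^pL_t^2L_x^2(Q_\rho(X_0))}$. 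The spatial oscillation of $\pd^2\bm v$ on $Q_{\theta\rho}(X_0)$ is then bounded by $\theta\rho\,\sup|\pd^3\bm v|$; the time oscillation is read off from the equation satisfied by $\pd^2\bm v$, whose drift contributes $(\theta\rho)^2\sup|\pd^4\bm v|$ and whose martingale part contributes, by the BDG inequality, $\sqrt{(\theta\rho)^2}\,\sup|\pd^3\bm v|$ in $L_\om^p$. It is precisely this square root that matches the parabolic modulus $|X-Y|_{\rm p}$ and supplies the time-regularity. Choosing $\bm P$ to be the space-time mean one obtains a Campanato decay of the oscillation of $\pd^2\bm v$ by the factor $\theta$; equivalently, absorbing $\bar{\bm f},\bar{\bm g}$ into a stochastic polynomial (a polynomial of degree $\le 2$ in $x$ whose coefficients are It\^{o} processes driven by $\{\BM^k\}$ and solving \eqref{eq:model} exactly with these frozen data), $\bm v$ minus that polynomial solves the homogeneous equation and its excess decays by $\theta$ per scale.

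Combining the two parts, the excess $\mathcal E(\rho):=\rho^{-2}\inf_{\bm P}\big(|Q_\rho|^{-1}\int_{Q_\rho(X_0)}\|\bm u-\bm P\|_{L_\om^p}^2\vd X\big)^{1/2}$, the infimum taken over stochastic polynomials of degree $\le2$ in $x$, obeys a recursion $\mathcal E(\theta\rho)\le C\theta\,\mathcal E(\rho)+C\theta^{-(d+6)/2}\vp(\rho)$, with $\mathcal E(\rho_0)\le CM$ at the top scale (this is where $\|\bm u\|_{L_\om^pL_t^2L_x^2(Q_1)}$, $\bbr\bm f\bbr_{0,p;Q_1}$ and $\bbr\bm g\bbr_{1,p;Q_1}$ enter, through the size of $\bm u$ and of the polynomial coefficients). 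Fixing $\theta$ so small that $C\theta\le\tfrac12$ and iterating, the Hessian coefficients converge as $\rho\to0$ to $\tfrac12\pd^2\bm u(X_0)$, and for $X,Y\in Q_{1/4}$ with $\Delta:=|X-Y|_{\rm p}$ the telescoping of the recursion across the scales $\{\theta^k\}$ bounds $\|\pd^2\bm u(X)-\pd^2\bm u(Y)\|_{L_\om^p}$ by a constant times $\Delta\,\mathcal E(\rho_0)+\sum_{\theta^k\le\Delta}\vp(\theta^k)+\Delta\sum_{\theta^k>\Delta}\theta^{-k}\vp(\theta^k)$, and comparing these geometric sums with the corresponding integrals produces exactly $\Delta M+\int_0^\Delta\vp(r)r^{-1}\vd r+\Delta\int_\Delta^1\vp(r)r^{-2}\vd r$. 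I expect the main obstacle to be the interplay of the three genuinely stochastic features: keeping $\bm w$ at the $L_x^2L_t^2$ level so the whole argument is uniform under mollification (one cannot afford to differentiate the Dini data), extracting the time-regularity of $\pd^2\bm v$ at the correct parabolic $\sqrt{|t-s|}$ rate via the BDG inequality, and taking the approximating class to be stochastic rather than deterministic polynomials so that the excess genuinely decays; once these are in place, the final Dini summation is standard bookkeeping.
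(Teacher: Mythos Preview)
Your proposal is correct and takes essentially the same approach as the paper: freeze the free terms at the centre, decompose $\bm u=\bm v+\bm w$ with your $\bm v$ being exactly the paper's approximant $\bm u^{\ell}$ (solving \eqref{eq:appr} on $Q_{2^{-\ell}}$ with frozen data and equal to $\bm u$ on the parabolic boundary) and $\bm w=\bm u-\bm u^{\ell}$ the zero Cauchy--Dirichlet remainder controlled by Proposition~\ref{lem:dirich}, extract the higher derivatives and the time-oscillation of $\partial^{2}\bm v$ from Proposition~\ref{lem:local-LPL2-1} and from the equation via the BDG inequality, and then iterate over dyadic scales and convert the geometric sums to the two Dini integrals. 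The only difference is packaging---the paper telescopes the explicit differences $\bm u^{\ell}-\bm u^{\ell+1}$ directly (Claims~\ref{lem:5-2}--\ref{claim:5-4}) while you phrase the same iteration as a Campanato recursion for an excess functional; one small point to align with Propositions~\ref{lem:local-LPL2-1} and~\ref{lem:dirich} is that the excess should carry the $L^{p}_{\omega}L^{2}_{t}L^{2}_{x}$ ordering of norms rather than $L^{2}_{X}L^{p}_{\omega}$.
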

Then the interior H\"{o}lder estimate are straightforward:
\begin{cor}
\label{cor:holder-model}Under the same setting of Theorem \eqref{thm:basic}
and given $\hold\in(0,1)$, there is a constant $C>0$, depending
only on $d,\kappa$ and $p$, such that
\[
\lbr\pd^{2}\bm{u}\rbr_{(\hold,\hold/2),p;Q_{1/4}}\le C\bigg[\|\bm{u}\|_{L_{\omega}^{p}L_{t}^{2}L_{x}^{2}(Q_{1})}+\frac{\bbr\bm{f}\bbr_{\hold,p;Q_{1}}+\bbr\bm{g}\bbr_{1+\hold,p;Q_{1}}}{\hold(1-\hold)}\bigg],
\]
provided the right-hand side is finite.
\end{cor}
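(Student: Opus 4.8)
The plan is to read off Corollary~\ref{cor:holder-model} directly from the Dini-type pointwise bound of Theorem~\ref{thm:basic}, the only new input being that the H\"older hypotheses $\bm{f}\in C_x^{\hold}$ and $\bm{g}\in C_x^{1+\hold}$ turn the abstract modulus of continuity $\vp$ into a power law. First I would observe that for $|x-y|\le r$ the spatial H\"older seminorms give
\[
\|\bm{f}(x,t)-\bm{f}(y,t)\|_{L_{\om}^{p}}+\|\pd\bm{g}(x,t)-\pd\bm{g}(y,t)\|_{L_{\om}^{p}}\le H\,r^{\hold},
\]
where $H:=\lbr\bm{f}\rbr_{\hold,p;Q_{1}}+\lbr\pd\bm{g}\rbr_{\hold,p;Q_{1}}\le\bbr\bm{f}\bbr_{\hold,p;Q_{1}}+\bbr\bm{g}\bbr_{1+\hold,p;Q_{1}}$, so that $\vp(r)\le H r^{\hold}$. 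In particular $\int_0^1 \vp(r)/r\,\vd r\le H/\hold<\infty$, which confirms the Dini condition and so legitimizes applying Theorem~\ref{thm:basic}.

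Next I would evaluate the two integrals appearing in Theorem~\ref{thm:basic} under this power bound. With $\Delta:=|X-Y|_{\mathrm p}$ one gets
\[
\int_{0}^{\Delta}\frac{\vp(r)}{r}\vd r\le H\int_{0}^{\Delta}r^{\hold-1}\vd r=\frac{H}{\hold}\,\Delta^{\hold},
\]
and, since $\hold-1<0$,
\[
\Delta\int_{\Delta}^{1}\frac{\vp(r)}{r^{2}}\vd r\le H\Delta\int_{\Delta}^{1}r^{\hold-2}\vd r=\frac{H}{1-\hold}\bigl(\Delta^{\hold}-\Delta\bigr)\le\frac{H}{1-\hold}\,\Delta^{\hold}.
\]
Feeding these into the estimate of Theorem~\ref{thm:basic} yields
\[
\|\pd^{2}\bm{u}(X)-\pd^{2}\bm{u}(Y)\|_{L_{\om}^{p}}\le C\Bigl[\Delta\,M+\tfrac{H}{\hold}\Delta^{\hold}+\tfrac{H}{1-\hold}\Delta^{\hold}\Bigr].
\]

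To finish I would divide by $\Delta^{\hold}$ and take the supremum over $X\neq Y$ in $Q_{1/4}$, which by the definitions of Section~\ref{sec:Main-results} is exactly $\lbr\pd^{2}\bm{u}\rbr_{(\hold,\hold/2),p;Q_{1/4}}$. Here the geometry of $Q_{1/4}=B_{1/4}\times(-\tfrac1{16},0]$ gives $\Delta=|x-y|+\sqrt{|t-s|}<\tfrac12+\tfrac14=\tfrac34<1$, so $\Delta^{1-\hold}<1$ and the term $\Delta M/\Delta^{\hold}=\Delta^{1-\hold}M$ is controlled by $M$. Using $\tfrac1{\hold}+\tfrac1{1-\hold}=\tfrac1{\hold(1-\hold)}$ then produces $C\bigl[M+H/(\hold(1-\hold))\bigr]$. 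Recalling $M=\|\bm{u}\|_{L_{\om}^{p}L_{t}^{2}L_{x}^{2}(Q_{1})}+\bbr\bm{f}\bbr_{0,p;Q_{1}}+\bbr\bm{g}\bbr_{1,p;Q_{1}}$ and that $\hold(1-\hold)\le\tfrac14$, the lower-order norms of $\bm{f},\bm{g}$ inside $M$ are dominated by their full $\hold$- and $(1+\hold)$-norms, which can be absorbed into the $\tfrac1{\hold(1-\hold)}$ factor; this gives precisely the asserted bound.

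The argument is essentially elementary once Theorem~\ref{thm:basic} is granted, so I expect no genuine obstacle; the only care needed is the bookkeeping of the blow-up factors, namely verifying that the singular behaviour as $\hold\to0^{+}$ and $\hold\to1^{-}$ combines exactly into the stated weight $1/(\hold(1-\hold))$ and that the geometric bound $\Delta<1$ on $Q_{1/4}$ is used to discard $\Delta^{1-\hold}$ and the leftover $-\Delta$ term. The finiteness proviso is automatic from the H\"older hypotheses.
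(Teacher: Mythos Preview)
Your proposal is correct and is exactly the intended derivation: the paper does not spell out a proof of the corollary beyond calling it ``straightforward,'' and what you wrote is the natural computation---specialize the Dini modulus to $\vp(r)\le H r^{\hold}$, evaluate the two integrals in Theorem~\ref{thm:basic}, and collect the factors into $1/(\hold(1-\hold))$.
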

\begin{proof}[Proof of Theorem \ref{thm:basic}]
Letting $\vf:\R^n \to \R$ be a nonnegative and symmetric mollifier  and 
$\vf^\eps(x)=\eps^n\vf(x/\eps)$, we 
define $u^{\alpha,\eps} = \vf^\eps * u^\alpha$, $f^\eps_\alpha = \vf^\eps * f_\alpha$ and $g_\alpha^\eps = \vf^\eps * g_\alpha$.
Then it is easily checked that $\bm{f}^\eps$ and $\partial \bm{g}^\eps$ are also Dini continuous and has the same continuity modulus  $\vp$ with $\bm{f}$ and $\partial \bm{g}$,
and
\begin{gather*}
\bbr \bm{f}^\eps - \bm{f} \bbr_{0,p;\R^n} + \bbr \bm{g}^\eps - \bm{g} \bbr_{1,p;\R^n} \to 0,\\
\|\partial^2 \bm{u}^\eps(X)-\partial^2 \bm{u}(X)\|_{L^p_\omega} \to 0,
\quad \forall\,X\in\R^n \times \R,
\end{gather*}
as $\eps\to 0$.
On the other hand, from Fubini's theorem one can check that $\bm{u}^\eps$ satisfies the model equation~\eqref{eq:model} in the classical sense with free terms $\bm{f}^\eps$ and $\bm{g}^\eps$.
Therefore, it suffices to prove the theorem for the mollified functions,
and the general case is straightforward by passing the limits.

Based on the above analysis and the smoothness of mollified functions, we may suppose that (cf. \cite{du2015cauchy}) 
\begin{itemize}
\item[({\bf A})] 
$\bm{f},\bm{g}\in L^p_\omega L^2_t H^k_x (Q_R)\cap C^k_x(Q_R;L^p_\omega)$ for all $k\in\Nat$ and $R>0$.
\end{itemize}

We can also set $X=0$ without loss of generality. With $\rho=1/2$,
we define
\[
Q^{\m}:=Q_{\rho^{\m}}=Q_{\rho^{\m}}(0,0),\quad\m\in\mathbf{N}=\{0,1,2,\dots\},
\]
and introduce the following boundary value problems:
\begin{equation}
\bigg\{\begin{aligned}\md u^{\alpha,\m} & =\big[a_{\alpha\beta}^{ij}\partial_{ij}u^{\beta,\m}+f_{\alpha}(0,t)\big]\md t+\big[\sigma_{\alpha\beta}^{ik}\partial_{i}u^{\beta,\m}+g_{\alpha}^{k}(0,t)+x^{i}\pd_{i}g_{\alpha}^{k}(0,t)\big]\vd\BM_{t}^{k}\\
u^{\al,\m} & =u^{\al}\quad\text{on}\ \pd_{{\rm p}}Q^{\m},
\end{aligned}
\label{eq:appr}
\end{equation}
where $\pd_{{\rm p}}Q^{\m}$ denotes the parabolic boundary of the
cylinder $Q^{\m}$. The existence and interior regularity of $\bm{u}^{\m}$
can be direct yielded by Proposition \ref{lem:dirich}.

Given a point $Y=(y,s)\in Q_{1/4}$, there is an $\m_{0}\in\mathbf{N}$
such that
\[
\Delta:=|Y|_{{\rm p}}\in[\rho^{\m_{0}+2},\rho^{\m_{0}+1}).
\]
So we have
\begin{align}
 & \|\pd^{2}\bm{u}(Y)-\pd^{2}\bm{u}(0)\|_{L_{\om}^{p}}\label{eq:decomp}\\
\le\, & \|\pd^{2}\bm{u}^{\m_{0}}(0)-\pd^{2}\bm{u}(0)\|_{L_{\om}^{p}}+\|\pd^{2}\bm{u}^{\m_{0}}(Y)-\pd^{2}\bm{u}(Y)\|_{L_{\om}^{p}}+\|\pd^{2}\bm{u}^{\m_{0}}(Y)-\pd^{2}\bm{u}^{\m_{0}}(0)\|_{L_{\om}^{p}}\nonumber \\
=:\, & N_{1}+N_{1}'+N_{2}.\nonumber 
\end{align}
As $N_{1}$ and $N_{1}'$ are similar, we are going to derive the
estimates for $N_{1}$ and $N_{2}$. 
\begin{claim}
\label{lem:5-2}${\displaystyle \bbr\pd^{m}(\bm{u}^{\m}-\bm{u}^{\m+1})\bbr_{0,p;Q^{\m+2}}\le C(d,\kappa,p)\rho^{(2-m)\m-m}\vp(\rho^{\m})}$,
where $m\in\mathbf{N}$.
\end{claim}
\begin{proof}
Applying Proposition \ref{lem:local-LPL2-1} to \eqref{eq:appr},
we have
\[
\bbr\pd^{m}(\bm{u}^{\m}-\bm{u}^{\m+1})\bbr_{0,p;Q^{\m+2}}\le C\rho^{-m\m-m}\left\Vert \fint_{Q^{\m+1}}|\bm{u}^{\m}-\bm{u}^{\m+1}|^{2}\right\Vert _{L_{\omega}^{p/2}}^{1/2}=:I_{\m,m}
\]
(hereafter we denote $\fint_Q  = {1 \over |Q|}\int_Q$ with $|Q|$ being the Lebesgue measure of the set $Q\subset \R^{n+1}$), and by Proposition \ref{lem:dirich},
\[
J_{\m}:=\left\Vert \fint_{Q^{\m+1}}|\bm{u}^{\m}-\bm{u}|^{2}\right\Vert _{L_{\omega}^{p/2}}^{1/2}\le C\rho^{2\m}\vp(\rho^{\m}).
\]
So we gain that
\[
I_{\m,m}\le C\rho^{-m\m-m}(J_{\m}+J_{\m+1})\le C\rho^{(2-m)\m-m}\vp(\rho^{\m}).
\]
The claim is proved.
\end{proof}
\begin{claim}
\label{lem:5-3}${\displaystyle N_{1}\le C(d,\kappa,p)\int_{0}^{\rho^{\m_{0}}}\frac{\vp(r)}{r}\vd r}$.
\end{claim}
\begin{proof}
It follows from Claim \ref{lem:5-2} that
\[
\sum_{\m\ge\m_{0}}\|\pd^{2}\bm{u}^{\m}(0)-\pd^{2}\bm{u}^{\m+1}(0)\|_{L_{\om}^{p}}\le C\sum_{\m\ge\m_{0}}\vp(\rho^{\m})\le C\int_{0}^{\rho^{\m_{0}}}\frac{\vp(r)}{r}\vd r,
\]
which implies that $\pd^{2}\bm{u}^{\m}(0)$ converges in $L_{\om}^{p}$
as $\m\to\infty$, if the limit is $\pd^{2}\bm{u}(0)$, then
\[
N_{1}=\|\pd^{2}\bm{u}^{\m_{0}}(0)-\pd^{2}\bm{u}(0)\|_{L_{\om}^{p}}\le\sum_{\m\ge\m_{0}}\|\pd^{2}\bm{u}^{\m}(0)-\pd^{2}\bm{u}^{\m+1}(0)\|_{L_{\om}^{p}}\le C\int_{0}^{\rho^{\m_{0}}}\frac{\vp(r)}{r}\vd r.
\]
So it suffices to show that $\lim_{\m\to\infty}\|\pd^{2}\bm{u}^{\m}(0)-\pd^{2}\bm{u}(0)\|_{L_{\om}^{2}}=0$.
From Proposition \ref{lem:local-LPL2-1} with $p=2$, we have
\begin{align}\label{eq:Claim44-1}
&\sup_{Q^{\m+1}}\|\pd^{2}\bm{u}^{\m}-\pd^{2}\bm{u}\|^{2}_{L_{\om}^{2}}
\le C\rho^{-4\m}\E\fint_{Q^{\m}}|\bm{u}^{\m}-\bm{u}|^{2}
+C\,\E\fint_{Q^{\m}}\Big(|\bm{f}(x,t)-\bm{f}(0,t)|^{2}\\
&\qquad\qquad
+\rho^{-2l}|\bm{g}(x,t)-\bm{g}(0,t)-x^i\pd_i\bm{g}(0,t)|^{2}
+|\pd\bm{g}(x,t)-\pd\bm{g}(0,t)|^{2}\Big)\vd X\nonumber \\ 
&\qquad\qquad+ C\sum_{k=1}^{[\frac{d+1}{2}]+1}\rho^{2\m k}\E\fint_{Q^{\m}}\big(|\pd^{k}\bm{f}|^{2}+|\pd^{k+1}\bm{g}|^{2}\big). \nonumber
\end{align}
The additional assumption ({\bf A}) on $\bm{f}$ and $\bm{g}$ together with Proposition \ref{lem:dirich} implies
\begin{align*}
 & \rho^{-4\m}\E\fint_{Q^{\m}}|\bm{u}^{\m}-\bm{u}|^{2}\\
 & \le C\,\E\fint_{Q^{\m}}\big(|\bm{f}(x,t)-\bm{f}(0,t)|^{2}+\rho^{-2\m}|\bm{g}(x,t)-\bm{g}(0,t)-x^{i}\pd_{i}\bm{g}(0,t)|^{2}\big)\vd X\\
 & \le C\vp(\rho^{\m})^2\to0,\quad\text{as}\ \m\to\infty.
\end{align*}
 And it is easier to obtain that
the last two terms on the right-hand side of \eqref{eq:Claim44-1} tend to zero as $\m\to\infty$.
Thus, $\lim_{\m\to\infty}\|\pd^{2}\bm{u}^{\m}(0)-\pd^{2}\bm{u}(0)\|_{L_{\om}^{2}}=0.$
The claim is proved.
\end{proof}
\begin{claim}
\label{claim:5-4}${\displaystyle N_{2}\le C(d,\kappa,p)\rho^{\m_{0}}\bigg(M+\int_{\rho^{\m_{0}}}^{1}\frac{\vp(r)}{r^{2}}\vd r\bigg)}$.
\end{claim}
\begin{proof}
Define $\bm{h}^{\m}=\bm{u}^{\m}-\bm{u}^{\m-1}$ for $\m=1,2,\dots,\m_{0}$,
then
\begin{align*}
N_{2} & =\|\pd^{2}\bm{u}^{\m_{0}}(Y)-\pd^{2}\bm{u}^{\m_{0}}(0)\|_{L_{\om}^{p}}\\
 & \le\|\pd^{2}\bm{u}^{0}(Y)-\pd^{2}\bm{u}^{0}(0)\|_{L_{\om}^{p}}+\sum_{\m=1}^{\m_{0}}\|\pd^{2}\bm{h}^{\m}(Y)-\pd^{2}\bm{h}^{\m}(0)\|_{L_{\om}^{p}}.
\end{align*}
As $\pd_{ij}\bm{u}^{0}$ satisfies a homogeneous system in $Q_{1}$
for any $i,j=1,\dots,d$, it follows from Proposition \ref{lem:local-LPL2-1}
that, for $m=1,2$,
\begin{align*}
\bbr\pd^{m}(\pd_{ij}\bm{u}^{0})\bbr_{0,p;Q_{1/4}} &
  \le  C\|\pd_{ij}\bm{u}^{0}\|_{L_{\omega}^{p}L_{t}^{2}L_{x}^{2}(Q_{1/2})}\\
  & \le  C(\|\pd_{ij}\bm{u}^{0}-\pd_{ij}\bm{u}\|_{L_{\omega}^{p}L_{t}^{2}L_{x}^{2}(Q_{1/2})}+\|\pd_{ij}\bm{u}\|_{L_{\omega}^{p}L_{t}^{2}L_{x}^{2}(Q_{1/2})})\\
  &  \le  C(\|\bm{u}\|_{L_{\omega}^{p}L_{t}^{2}L_{x}^{2}(Q_{1})}+\bbr\bm{f}\bbr_{0,p;Q_1}+\bbr\bm{g}\bbr_{1,p;Q_{1}})
  = CM,
\end{align*}
and for $-1/16<s<t\le0$ and $x\in B_{1/4}$,
\begin{align*}
 \|\pd^{2}u^{\alpha,0}(x,t)-\pd^{2}u^{\alpha,0}(x,s)\|_{L_{\om}^{p}}
 &  =\left\Vert \int_{s}^{t}a_{\alpha\beta}^{ij}\partial_{ij}(\pd^{2}u^{\beta,0})\vd\tau+\int_{s}^{t}\sigma_{\alpha\beta}^{ik}\partial_{i}(\pd^{2}u^{\beta,0})\vd w_{\tau}^{k}\right\Vert _{L_{\om}^{p}}\\
 & \le C\sqrt{t-s}(\bbr\pd^{3}\bm{u}^{0}\bbr_{0,p;Q_{1/4}}+\bbr\pd^{4}\bm{u}^{0}\bbr_{0,p;Q_{1/4}})\\
 &  \le CM\sqrt{t-s}.
\end{align*}
So combining above two inequalities we have
\[
\|\pd^{2}\bm{u}^{{0}}(Y)-\pd^{2}\bm{u}^{{0}}(0)\|_{L_{\om}^{p}}\le CM|Y|_{{\rm p}}\le CM\rho^{\m_{0}}.
\]
Next, by Claim \ref{lem:5-2},
\[
\rho^{-\m}\bbr\pd^{3}\bm{h}^{\m}\bbr_{0,p;Q^{\m+1}}+\bbr\pd^{4}\bm{h}^{\m}\bbr_{0,p;Q^{\m+1}}\le C\rho^{-2\m}\vp(\rho^{\m-1}),
\]
thus, for $-\rho^{2(\m_{0}+1)}\le t\le0$ and $|x|\le\rho^{\m_{0}+1}$,
\[
\|\pd^{2}\bm{h}^{\m}(x,0)-\pd^{2}\bm{h}^{\m}(0,0)\|_{L_{\om}^{p}}\le C\rho^{\m_{0}-\m}\vp(\rho^{\m-1})
\]
and
\begin{align*}
  \|\pd^{2}h^{\alpha,\m}(x,t)-\pd^{2}h^{\alpha,\m}(x,0)\|_{L_{\om}^{p}}
  &  =\left\Vert \int_{0}^{t}a_{\alpha\beta}^{ij}\partial_{ij}(\pd^{2}h^{\beta,\m})\vd\tau+\int_{s}^{t}\sigma_{\alpha\beta}^{ik}\partial_{i}(\pd^{2}h^{\beta,\m})\vd w_{\tau}^{k}\right\Vert _{L_{\om}^{p}}\\
  & \le C(\rho^{\m_{0}}\bbr\pd^{3}\bm{h}^{\m}\bbr_{0,p;Q_{1/4}}+\rho^{2\m_{0}}\bbr\pd^{4}\bm{h}^{\m}\bbr_{0,p;Q_{1/4}})\\
  &  \le  C\rho^{\m_{0}-\m}\vp(\rho^{\m-1}).
\end{align*}
Therefore,
\[
N_{2}\le CM\rho^{\m_{0}}+C\sum_{\m=1}^{\m_{0}}\rho^{\m_{0}-\m}\vp(\rho^{\m-1})\le C\rho^{\m_{0}}\bigg(M+\int_{\rho^{\m_{0}}}^{1}\frac{\vp(r)}{r^{2}}\vd r\bigg).
\]
The claim is proved.
\end{proof}
Combining \eqref{eq:decomp} and Claims \ref{lem:5-3} and \ref{claim:5-4},
we conclude Theorem \ref{thm:basic}.
\end{proof}

\section{\label{sec:H=0000F6lder-estimates-for}H\"{o}lder estimates for general
systems}

This section is devoted to the proofs of Theorems \ref{thm:interior} and \ref{thm:cauchy}. We need two technical lemmas whose proofs can
be found in, for example, \cite{du2015cauchy}.
\begin{lem}
\label{lem:iter}Let $\vf:[0,T]\to[0,\infty)$ satisfy
\[
\vf(t)\le\tht\vf(s)+\sum_{i=1}^{m}A_{i}(s-t)^{-\eta_{i}}\quad\forall\,0\le t<s\le T
\]
for some nonnegative constants $\tht,\eta_{i}$ and $A_{i}$ ($i=1,\dots m$),
where $\tht<1$. Then
\[
\vf(0)\le C\sum_{i=1}^{m}A_{i}T^{-\eta_{i}},
\]
where $C$ depends only on $\eta_{1},\dots,\eta_{n}$ and $\tht$.
\end{lem}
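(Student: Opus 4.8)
The statement is the classical iteration (``hole-filling'') lemma in a form allowing several power terms, and the plan is to apply the hypothesis repeatedly along a geometric sequence of points marching from $0$ up to $T$, then sum the accumulated errors as convergent geometric series. The only genuine choice to be made is the ratio of that geometric sequence, which must be tuned to the exponents $\eta_i$ and to $\tht$.

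Concretely, I would fix a parameter $\tau\in(0,1)$ (to be specified) and set the increasing sequence
\[
t_{0}=0,\qquad t_{n+1}=t_{n}+(1-\tau)\tau^{n}T\quad(n\ge0),
\]
so that $t_{n}\uparrow T$ and $t_{n+1}-t_{n}=(1-\tau)\tau^{n}T$. Applying the hypothesis with $t=t_{n}$, $s=t_{n+1}$ and iterating $N$ times (an easy induction) gives
\[
\vf(0)\le\tht^{N}\vf(t_{N})+\sum_{n=0}^{N-1}\tht^{n}\sum_{i=1}^{m}A_{i}(t_{n+1}-t_{n})^{-\eta_{i}}
=\tht^{N}\vf(t_{N})+\sum_{i=1}^{m}\frac{A_{i}}{[(1-\tau)T]^{\eta_{i}}}\sum_{n=0}^{N-1}\big(\tht\,\tau^{-\eta_{i}}\big)^{n},
\]
where the last equality just uses $\tht^{n}(t_{n+1}-t_{n})^{-\eta_{i}}=[(1-\tau)T]^{-\eta_{i}}(\tht\,\tau^{-\eta_{i}})^{n}$.

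The decisive step is the choice of $\tau$: since $\tht<1$ and there are only finitely many exponents $\eta_{i}\ge0$, I can pick $\tau\in(0,1)$ so close to $1$ that $\tht\,\tau^{-\eta_{i}}<1$ for \emph{every} $i$ simultaneously (for any $\eta_{i}=0$ the factor is simply $\tht<1$). With such a $\tau$ each inner geometric series converges, and letting $N\to\infty$ yields
\[
\vf(0)\le\sum_{i=1}^{m}\frac{A_{i}}{T^{\eta_{i}}}\cdot\frac{(1-\tau)^{-\eta_{i}}}{1-\tht\,\tau^{-\eta_{i}}}\le C\sum_{i=1}^{m}A_{i}T^{-\eta_{i}},
\]
with $C=\max_{i}(1-\tau)^{-\eta_{i}}(1-\tht\,\tau^{-\eta_{i}})^{-1}$ depending only on $\tht$ and $\eta_{1},\dots,\eta_{m}$, exactly as claimed.

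The one point needing care — and the place where the hypotheses must be read as in the classical versions of this lemma — is passing to the limit in the remainder $\tht^{N}\vf(t_{N})$: since $\tht^{N}\to0$, this term vanishes \emph{provided $\vf$ is bounded on $[0,T]$}, which holds in every application of the lemma in this paper, where $\vf$ is a finite H\"older-type seminorm over a bounded cylinder. Thus boundedness of $\vf$ is the only ingredient beyond elementary summation, and I expect the handling of the multiple exponents (the simultaneous choice of $\tau$) together with this limit to be the sole nonroutine points; everything else is geometric-series bookkeeping.
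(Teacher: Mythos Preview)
Your argument is the standard geometric-sequence iteration and is correct; the paper itself does not prove this lemma but merely cites \cite{du2015cauchy}, where exactly this approach is carried out.

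One small refinement worth noting: your caveat that $\varphi$ must be bounded can in fact be dropped using only the stated hypothesis. Since $t_{N}=T(1-\tau^{N})$, applying the assumed inequality once more with $t=t_{N}$ and $s=T$ gives
\[
\varphi(t_{N})\le\theta\,\varphi(T)+\sum_{i=1}^{m}A_{i}(T\tau^{N})^{-\eta_{i}},
\]
so that
\[
\theta^{N}\varphi(t_{N})\le\theta^{N+1}\varphi(T)+\sum_{i=1}^{m}A_{i}T^{-\eta_{i}}\big(\theta\,\tau^{-\eta_{i}}\big)^{N}\longrightarrow 0
\]
by precisely the same choice of $\tau$ (recall $\varphi(T)<\infty$ since $\varphi$ takes values in $[0,\infty)$). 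Hence the remainder term vanishes without any additional boundedness assumption, and the lemma holds exactly as stated.
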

\begin{lem}
\label{lem:interp}Let $p\ge1$, $R>0$ and $0\le s<r$. There exists a constant
$C>0$, depending only on $d$ and $p$, such that
\[
\lbr\bm{u}\rbr_{s,p;Q_{R}}\le C\eps^{r-s}\lbr\bm{u}\rbr_{r,p;Q_{R}}+C\eps^{-s-d/2}\big[\E\|\bm{u}\|_{L^{2}(Q_{R})}^{p}\big]^{\!\frac{1}{p}}
\]
for any $\bm{u}\in C^{r}(Q_{R};L_{\om}^{p})$ and $\eps\in(0,R)$.
\end{lem}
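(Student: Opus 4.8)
The plan is to regard $\bm u$ as a map of the spatial variable taking values in the Banach space $E:=L_{\om}^{p}$, with $t$ (and $\om$) entering only as parameters; then $\lbr\bm u\rbr_{s,p;Q_R}$ and $\lbr\bm u\rbr_{r,p;Q_R}$ are nothing but the spatial $C^{s}$- and $C^{r}$-seminorms of this $E$-valued map (with an outer supremum in $t$), and the asserted inequality is a Gagliardo--Nirenberg-type interpolation in which the lowest-order term has been replaced by a local $L^{2}$-average. I would prove it in two essentially independent steps and then optimise over the auxiliary scale $\eps$.

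First I would establish the classical interpolation against the supremum norm, namely
\[
\lbr\bm u\rbr_{s,p;Q_R}\le C\eps^{r-s}\lbr\bm u\rbr_{r,p;Q_R}+C\eps^{-s}\,\bbr\bm u\bbr_{0,p;Q_R},\qquad\eps\in(0,R).
\]
By mollifying in $x$ one may assume $\bm u$ smooth and pass to the limit afterwards. For each fixed $t$ this is the standard estimate: one splits every difference quotient $\|\pd^{\mathfrak s}\bm u(x,t)-\pd^{\mathfrak s}\bm u(y,t)\|_{E}/|x-y|^{\hold}$ according to whether $|x-y|\ge\eps$, in which case it is controlled by the lower-order supremum norms and produces the factor $\eps^{-s}$, or $|x-y|<\eps$, in which case a finite-difference/Taylor expansion against the top seminorm produces $\eps^{r-s}$. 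Since only the triangle inequality for $\|\cdot\|_{E}$ is used, the scalar argument transfers verbatim to the $E$-valued setting, uniformly in $t$.

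The second step converts $\bbr\bm u\bbr_{0,p;Q_R}$ into the $L^{2}$-quantity and is where the weight $\eps^{-d/2}$ is produced. Fixing $t$ and $x_{0}$ and averaging over $B_{\eps}(x_{0})\subset B_{R}$, I would write, inside the norm $\|\cdot\|_{E}$,
\[
\|\bm u(x_{0},t)\|_{E}\le\Big\|\fint_{B_{\eps}(x_{0})}\bm u(x,t)\vd x\Big\|_{E}+\fint_{B_{\eps}(x_{0})}\|\bm u(x_{0},t)-\bm u(x,t)\|_{E}\vd x.
\]
The oscillation term is bounded by $\lbr\bm u\rbr_{\theta,p;Q_R}\,\eps^{\theta}$ for a fixed $\theta\in(0,\min(r,1))$, while Cauchy--Schwarz in $x$ applied \emph{pointwise in $\om$} (so that the expectation stays outermost) bounds the mean by $C\eps^{-d/2}\big[\E\|\bm u(\cdot,t)\|_{L^{2}(B_{\eps})}^{p}\big]^{1/p}$; this is important, since it produces the stated ordering rather than its Minkowski interchange. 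The intermediate seminorm $\lbr\bm u\rbr_{\theta,p;Q_R}$ is in turn reabsorbed through the first step, so that inserting this bound into the estimate above and optimising the two free scales produces the spatial $L^{2}$-mean in place of $\bbr\bm u\bbr_{0,p;Q_R}$.

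The main obstacle I anticipate is the final reconciliation of this \emph{spatial} $L^{2}$-average with the space-time norm $[\E\|\bm u\|_{L^{2}(Q_R)}^{p}]^{1/p}$ in the statement. The averaging argument is genuinely $d$-dimensional --- which is exactly what yields the exponent $-s-d/2$ rather than a parabolic $-s-(d+2)/2$ --- so it naturally controls the supremum over $t$ of the spatial $L^{2}$-mean, and passing from this to the full integral over $Q_R$ is the delicate point, where the parabolic geometry of $Q_R$ (and, in the applications, the time-regularity available for the functions under consideration) must be used. The two interpolation steps themselves are routine; the care needed lies in the bookkeeping of the powers of $\eps$, in keeping the constant dependent only on $d,p$ (and on $r-s$), and in performing the $L^{2}$--$L_{\om}^{p}$ interchange in the direction justified by Minkowski's integral inequality.
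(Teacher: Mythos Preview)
The paper does not supply its own proof of this lemma; it simply cites \cite{du2015cauchy}. So there is no argument in the paper to compare against, and your two-step plan (classical $E$-valued interpolation in $x$, then replacement of $\bbr\bm u\bbr_{0,p}$ by a local $L^{2}$-average via a mean-oscillation splitting) is exactly the standard route one would expect. Both steps are carried out correctly as you describe, and the bookkeeping of the $\eps$-powers is right.

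Your ``main obstacle'' is not a gap in your reasoning but a genuine defect in the statement as printed. As written, the inequality cannot hold for every $\bm u\in C^{r}_{x}(Q_{R};L^{p}_{\om})$: take $\bm u(x,t)=\psi(t)$ with $\psi$ bounded but very concentrated in time, so that $\lbr\bm u\rbr_{r,p}=0$ while $\lbr\bm u\rbr_{0,p}=\sup_{t}|\psi(t)|$ and $[\E\|\bm u\|_{L^{2}(Q_{R})}^{p}]^{1/p}=|B_{R}|^{1/2}\|\psi\|_{L^{2}_{t}}$; letting $\|\psi\|_{L^{2}_{t}}\to 0$ with $\sup|\psi|=1$ contradicts the inequality for any fixed $\eps$. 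Thus the passage from a spatial $L^{2}$-average to the space-time $L^{2}(Q_{R})$-norm, which you rightly flagged, is not merely delicate---it is impossible without further input. In the paper's applications the lemma is only invoked for solutions of the SPDE, which do possess the requisite time continuity (indeed $C^{\hold/2}$ in $t$ with values in $L^{p}_{\om}$), so either one reads the hypothesis $C^{r}(Q_{R};L^{p}_{\om})$ as including this, or one replaces $[\E\|\bm u\|_{L^{2}(Q_{R})}^{p}]^{1/p}$ by the quantity your argument actually produces, namely $\sup_{t}[\E\|\bm u(\cdot,t)\|_{L^{2}(B_{R})}^{p}]^{1/p}$; with either reading your proof goes through without change.
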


Now we prove the a priori \emph{interior H\"{o}lder estimates} for system \eqref{eq:main}.

\begin{proof}[Proof of Theorem~\ref{thm:interior}]
With a change of variable, we may move the point $X$ to the origin.
Let $\rho/2\le r<R\le\rho$ with $\rho\in(0,1/4)$ to be defined.
Take a nonnegative cut-off function $\zeta\in C_{0}^{\infty}(\R^{d+1})$
such that $\zeta=1$ on $Q_{r}$, $\zeta=0$ outside $Q_{R}$, and
for $\gamma\ge0$,
\[
[\zeta]_{(\gamma,\gamma/2);\R^{d+1}}\le C(d)\,(R-r)^{-\gamma}.
\]
Set $\bm{v}=\zeta\bm{u}$, and
\[
\tilde{a}_{\alpha\beta}^{ij}(t)=a_{\alpha\beta}^{ij}(0,t),\quad\tilde{\sigma}_{\alpha\beta}^{ik}(t)=\sigma_{\alpha\beta}^{ik}(0,t),
\]
then $\bm{v}=(v^{1},\dots,v^{N})$ satisfies
\[
\md v^{\alpha}=\big(\tilde{a}_{\alpha\beta}^{ij}\partial_{ij}v^{\beta}+\tilde{f}_{\alpha}\big)\md t+\big(\tilde{\sigma}_{\alpha\beta}^{ik}\partial_{i}v^{\beta}+\tilde{g}_{\alpha}^{k}\big)\vd\BM_{t}^{k}
\]
where
\begin{align*}
\tilde{f}_{\alpha} & =(a_{\alpha\beta}^{ij}-\tilde{a}_{\alpha\beta}^{ij})\zeta\partial_{ij}u^{\beta}+(b_{\alpha\beta}^{i}\zeta-2a_{\alpha\beta}^{ij}\pd_{j}\zeta)\pd_{i}u^{\beta}\\
 & \quad+(c_{\alpha\beta}\zeta-b_{\alpha\beta}^{i}\pd_{i}\zeta-a_{\alpha\beta}^{ij}\pd_{ij}\zeta)u^{\beta}+\zeta_{t}u^{\alpha}+\zeta f^{\alpha},\\
\tilde{g}_{\alpha}^{k} & =(\sigma_{\alpha\beta}^{ik}-\tilde{\sigma}_{\alpha\beta}^{ik})\zeta\partial_{i}u^{\beta}+(v^{k}\zeta-\sigma_{\alpha\beta}^{ik}\pd_{i}\zeta)u^{\beta}+\zeta g^{\alpha}.
\end{align*}
Obviously, $\tilde{a}_{\alpha\beta}^{ij}$ and $\tilde{\sigma}_{\alpha\beta}^{ik}$
satisfy the MSP condition with $\lambda=\lambda(0,t)$. So by Lemma
\ref{lem:interp},
\begin{align*}
\bbr\tilde{\bm{f}}\bbr_{\hold,p;Q_{R}} & \le(\eps+K\rho^{\hold})\lbr\pd^{2}\bm{u}\rbr_{\hold,p;Q_{R}}+C_{1}(R-r)^{-2-\hold-d/2}\|\bm{u}\|_{L_{\omega}^{p}L_{t}^{2}L_{x}^{2}(Q_{R})}\\
 & \quad+\lbr\bm{f}\rbr_{\hold,p;Q_{R}}+C_{1}(R-r)^{-\hold}\bbr\bm{f}\bbr_{0,p;Q_{R}},\\
\bbr\tilde{\bm{g}}\bbr_{1+\hold,p;Q_{R}} & \le(\eps+K\rho^{\hold})\lbr\bm{u}\rbr_{2+\hold,p;Q_{R}}+C_{1}(R-r)^{-2-\hold-d/2}\|\bm{u}\|_{L_{\omega}^{p}L_{t}^{2}L_{x}^{2}(Q_{R})}\\
 & \quad+\lbr\bm{g}\rbr_{1+\hold,p;Q_{R}}+C_{1}(R-r)^{-1-\hold}\bbr\bm{g}\bbr_{0,p;Q_{R}},
\end{align*}
where $C_{1}=C_{1}(d,K,p,\eps)$. Applying Corollary \ref{cor:holder-model},
we gain that 
\begin{align*}
 & \lbr\pd^{2}\bm{u}\rbr_{(\hold,\hold/2),p;Q_{r}}\\
 & \le C_{2}\big[(\eps+K\rho^{\hold})\,\lbr\pd^{2}\bm{u}\rbr_{(\hold,\hold/2),p;Q_{R}}+C_{1}(R-r)^{-2-\hold-d/2}\|\bm{u}\|_{L_{\omega}^{p}L_{t}^{2}L_{x}^{2}(Q_{R})}\\
 & \quad+\lbr\bm{f}\rbr_{\hold,p;Q_{R}}+C_{1}(R-r)^{-\hold}\bbr\bm{f}\bbr_{0,p;Q_{R}}+\lbr\bm{g}\rbr_{1+\hold,p;Q_{R}}+C_{1}(R-r)^{-1-\hold}\bbr\bm{g}\bbr_{0,p;Q_{R}}\big],
\end{align*}
where $C_{2}=C_{2}(d,\kappa,K,p,\hold)$. Set $\eps=(4C_{2})^{-1}$,
then 
\[
C_{2}(\eps+K\rho^{\hold})\le\frac{1}{2}\quad\text{for any}\ \rho\le(4C_{2}K)^{-1/\hold}=:\rho_{0}.
\]
Thus, by Lemma \ref{lem:iter} we have
\[
\lbr\pd^{2}\bm{u}\rbr_{(\hold,\hold/2),p;Q_{\rho/2}}\le C\big(\rho^{-2-\hold-d/2}\|\bm{u}\|_{L_{\omega}^{p}L_{t}^{2}L_{x}^{2}(Q_{\rho})}+\rho^{-\hold}\bbr\bm{f}\bbr_{\hold,p;Q_{\rho}}+\rho^{-1-\hold}\bbr\bm{g}\bbr_{1+\hold,p;Q_{\rho}}\big),
\]
where the constant $C$ depends only on $d,\kappa,K,p$, and $\hold$.
The proof is complete.
\end{proof}
\begin{proof}[Proof of Theorem \ref{thm:cauchy}]
The solvability of the Cauchy problem follows from the \emph{a priori}
estimate \eqref{eq:global} by the standard method of continuity (see
\cite[Theorem 5.2]{gilbarg2001elliptic}), so it suffices to prove
the \emph{a priori} estimate \eqref{eq:global}.

We may extend the equations to $\R^{d}\times(-\infty,T]\times\PS$
by letting $\bm{u}(x,t),\bm{f}(x,t)$ and $\bm{g}(x,t)$ be zero if
$t\le0$. Take $\tau\in(0,T]$ and $R=\rho_{0}/2$, where $\rho_{0}$
is determined in Theorem \ref{thm:interior}. 
Applying the estimate
\eqref{eq:interior} on the cylinders centered at $(x,s)$ for all
$s\in(-1,\tau],$ we can obtain that
\begin{align*}
\lbr\pd^{2}\bm{u}\rbr_{(\hold,\hold/2),p;\mathcal{Q}_{R,\tau}(x)} & \le C\big(\|\bm{u}\|_{L_{\omega}^{p}L_{t}^{2}L_{x}^{2}(\mathcal{Q}_{2R,\tau}(x))}+\bbr\bm{f}\bbr_{\hold,p;\mathcal{Q}_{2R,\tau}(x)}+\bbr\bm{g}\bbr_{1+\hold,p;\mathcal{Q}_{2R,\tau}(x)}\big)\\
 & \le C\big(\|\bm{u}\|_{L_{\omega}^{p}L_{t}^{2}L_{x}^{2}(\mathcal{Q}_{2R,\tau}(x))}+\bbr\bm{f}\bbr_{\hold,p;\mathcal{Q}_{\tau}}+\bbr\bm{g}\bbr_{1+\hold,p;\mathcal{Q}_{\tau}}\big),
\end{align*}
then by Lemma \ref{lem:interp},
\begin{align}
\bbr\bm{u}\bbr_{(2+\hold,\hold/2),p;\mathcal{Q}_{R,\tau}(x)} & \le C\big(\|\bm{u}\|_{L_{\omega}^{p}L_{t}^{2}L_{x}^{2}(\mathcal{Q}_{2R,\tau}(x))}+\bbr\bm{f}\bbr_{\hold,p;\mathcal{Q}_{\tau}}+\bbr\bm{g}\bbr_{1+\hold,p;\mathcal{Q}_{\tau}}\big).\label{eq:5-003}
\end{align}
Define
\begin{align*}
M_{x,R}^{\tau}(\bm{u}) =\sup_{0\le t\le\tau}\biggl(\fint_{B_{R}(x)}\E|\bm{u}(y,t)|^{p}\vd y\biggr)^{\frac{1}{p}},\qquad
M_{R}^{\tau}(\bm{u}) =\sup_{x\in\R^{d}}M_{x,R}^{\tau}(\bm{u}).
\end{align*}
Obviously, $\|\bm{u}\|_{L_{\omega}^{p}L_{t}^{2}L_{x}^{2}(\mathcal{Q}_{2R,\tau}(x))}\le C(d,p,R)\,M_{R}^{\tau}(\bm{u}).$
So \eqref{eq:5-003} implies 
\begin{align}
\sup_{x\in\R^{d}}\bbr\bm{u}\bbr_{(2+\hold,\hold/2),p;\mathcal{Q}_{R,\tau}(x)} & \le C_3\big(M_{R}^{\tau}(\bm{u})+\bbr\bm{f}\bbr_{\hold,p;\mathcal{Q}_{\tau}}+\bbr\bm{g}\bbr_{1+\hold,p;\mathcal{Q}_{\tau}}\big).\label{eq:5-004}
\end{align}
To get rid of $M_{R}^{\tau}(\bm{u})$, we apply It\^o's formula to
$|\bm{u}|^{p}$:
\begin{align*}
\md|\bm{u}|^{p} & =p|\bm{u}|^{p-2}\Big[u^{\alpha}(a_{\alpha\beta}^{ij}\pd_{ij}u^{\beta}+b_{\alpha\beta}^{ij}\pd_{i}u^{\beta}+c_{\alpha\beta}u^{\beta}+f_{\alpha})+\frac{1}{2}\sum_{k}(\si_{\al\be}^{ik}\pd_{i}u^{\be}+g_{\al}^{k})^{2}\Big]\vd t\\
 & \quad+\frac{p(p-2)}{2}\bm{1}_{\{|\bm{u}|\neq0\}}|\bm{u}|^{p-4}\sum_{k}(\si_{\al\be}^{ik}u^{\al}\pd_{i}u^{\be}+u^{\al}g_{\al}^{k})^{2}\vd t+\md M_{t},
\end{align*}
where $M_{t}$ is a martingale. Integrating on $\mathcal{Q}_{R,\tau}(x)\times\PS$
and by the H\"{o}lder inequality, we can derive that 
\[
\sup_{t\in[0,\tau]}\E\int_{B_{R}(x)}|\bm{u}(y,t)|^{p}\vd y\le C_{4}\,\E\int_{\mathcal{Q}_{R,\tau}(x)}(|\pd^{2}\bm{u}|^{p}+|\bm{u}|^{p}+|\bm{f}|^{p}+|\bm{g}|^{p})\vd X
\]
with $C_{4}=C_{4}(d,N,K,p)$, which implies that
\begin{align*}
M_{x,R}^{\tau}(\bm{u})
& \le C_{4}\tau\,\big(\bbr\bm{u}\bbr_{2,p;\mathcal{Q}_{R,\tau}(x)}+\bbr\bm{f}\bbr_{0,p;\mathcal{Q}_{\tau}}+\bbr\bm{g}\bbr_{0,p;\mathcal{Q}_{\tau}}\big) \\
& \le{C_{4}\tau}\Big(\sup_{x\in\R^{d}}\bbr\bm{u}\bbr_{(2+\hold,\hold/2),p;\mathcal{Q}_{R,\tau}(x)}+\bbr\bm{f}\bbr_{0,p;\mathcal{Q}_{\tau}}+\bbr\bm{g}\bbr_{0,p;\mathcal{Q}_{\tau}}\Big),
\end{align*}
Substituting the last relation into \eqref{eq:5-004} and taking $\tau=(2C_{3}C_{4})^{-1}$,
we get
\begin{align*}
\sup_{x\in\R^{d}}\bbr\bm{u}\bbr_{(2+\hold,\hold/2),p;\mathcal{Q}_{R,\tau}(x)} & \le C\big(\bbr\bm{f}\bbr_{\hold,p;\mathcal{Q}_{\tau}}+\bbr\bm{g}\bbr_{1+\hold,p;\mathcal{Q}_{\tau}}\big),
\end{align*}
and equivalently, 
\begin{align}
\bbr\bm{u}\bbr_{(2+\hold,\hold/2),p;\mathcal{Q}_{\tau}} & \le C_{(\tau)}\big(\bbr\bm{f}\bbr_{\hold,p;\mathcal{Q}_{\tau}}+\bbr\bm{g}\bbr_{1+\hold,p;\mathcal{Q}_{\tau}}\big)\label{eq:5-005}
\end{align}
with $C_{(\tau)}=C_{(\tau)}(d,\kappa,K,p,\hold)\ge1$.

Let us conclude the proof by induction. Assume that there is a constant
$C_{(S)}\ge1$ for some $S>0$ such that
\begin{align*}
\bbr\bm{u}\bbr_{(2+\hold,\hold/2),p;\mathcal{Q}_{S}} & \le C_{(S)}\big(\bbr\bm{f}\bbr_{\hold,p;\mathcal{Q}_{S}}+\bbr\bm{g}\bbr_{1+\hold,p;\mathcal{Q}_{S}}\big).
\end{align*}
Then applying \eqref{eq:5-005} to $\bm{v}(x,t)=\bm{1}_{\{t\ge0\}}\cdot[\bm{u}(x,t+S)-\bm{u}(x,S)]$,
one can easily derive that
\begin{align*}
\bbr\bm{v}\bbr_{(2+\hold,\hold/2),p;\mathcal{Q}_{\tau}} & \le C_{(\tau)}\big(\bbr\bm{f}\bbr_{\hold,p;\mathcal{Q}_{S+\tau}}+\bbr\bm{g}\bbr_{1+\hold,p;\mathcal{Q}_{S+\tau}}+\tilde{C}\bbr\bm{u}(\cdot,S)\bbr_{2+\hold,p;\R^{d}}\big)\\
 & \le2C_{(\tau)}\tilde{C}C_{(S)}\big(\bbr\bm{f}\bbr_{\hold,p;\mathcal{Q}_{S+\tau}}+\bbr\bm{g}\bbr_{1+\hold,p;\mathcal{Q}_{S+\tau}}\big),
\end{align*}
with $\tilde{C}=\tilde{C}(N,K)\ge1$, so
\begin{align*}
\bbr\bm{u}\bbr_{(2+\hold,\hold/2),p;\mathcal{Q}_{S+\tau}} & \le\bbr\bm{v}\bbr_{(2+\hold,\hold/2),p;\mathcal{Q}_{\tau}}+2\bbr\bm{u}\bbr_{(2+\hold,\hold/2),p;\mathcal{Q}_{S}}\\
 & \le4C_{(\tau)}\tilde{C}C_{(S)}\big(\bbr\bm{f}\bbr_{\hold,p;\mathcal{Q}_{S+\tau}}+\bbr\bm{g}\bbr_{1+\hold,p;\mathcal{Q}_{S+\tau}}\big),
\end{align*}
 that means $C_{(S+\tau)}\le4C_{(\tau)}\tilde{C}C_{(S)}$. By iteration we have $C_{S}\le C\me^{CS}$ with $C=C(d,N,\kappa,K,p,\hold)$,
and the theorem is proved.
\end{proof}

\section{More comments on the MSP condition}

In this section we discuss more examples on the sharpness and flexibility
of the MSP condition (Definition~\ref{cond:msp}). We always let $d=1$ and assume that
the coefficient matrices $A=[a_{\alpha\beta}]$ and $B=[\sigma_{\alpha\beta}]$
are \emph{constant}. We write $M\gg0$ if the matrix $M$ is positive
definite.

Under the above setting the MSP condition can be written into
the following form if we set [$\lambda_{\alpha\beta}^{ik}]=(B+B')/2-\Lambda$ in \eqref{eq:Aij}. 
\begin{condition}
\label{cond:case-d1}There is a symmetric $N\times N$ real matrix
$\Lambda$ such that
\begin{equation}
A+A'-B'B-(p-2)(T_{B}+\Lambda)'(T_{B}+\Lambda)\gg0\label{eq:d1}
\end{equation}
where $T_{B}:=(B-B')/2$ is the skew-symmetric component
of $B$.
\end{condition}

\begin{example}
consider the following system
\begin{equation}
\bigg\{\begin{aligned}\md u^{(1)} & =u_{xx}^{(1)}\vd t+(\lambda u_{x}^{(1)}-\mu u_{x}^{(2)})\vd\BM_{t},\\
\md u^{(2)} & =u_{xx}^{(2)}\vd t+(\mu u_{x}^{(1)}+\lambda u_{x}^{(2)})\vd\BM_{t}
\end{aligned}
\label{eq:exam}
\end{equation}
with $x\in\mathbf{T}=\R/(2\pi\mathbf{Z})$, real constants $\lambda$
and $\mu$, and with the initial data
\begin{equation}
u^{(1)}(x,0)+\sqrt{-1}u^{(2)}(x,0)=\sum\nolimits_{n\in\mathbf{Z}}\me^{-n^{2}}\cdot\me^{\sqrt{-1}nx}.\label{eq:initial}
\end{equation}
Evidently, if $\lambda^{2}+\mu^{2}<2$, then system \eqref{eq:exam}
satisfies the condition \eqref{eq:parabolic}, and from the result
of \cite{Kim2013w}, it has a unique solution $\bm{u}=(u^{(1)},u^{(2)})'$
in the space $L^{2}(\PS;C([0,T];H^{m}(\mathbf{T}))$ with any $m\ge0$
and $T>0$. 

To apply our results to \eqref{eq:exam}, we should assume it to satisfy
Condition \ref{cond:case-d1}. In the next two lemma, we first simplify
the condition into a specific constraint on $\lambda$ and $\mu$,
and then prove it to be optimal. 
\begin{lem}
Let $p\ge2$. The coefficients of system \eqref{eq:exam} satisfies
Condition \ref{cond:case-d1} if and only if they satisfy \eqref{eq:d1}
with $\Lambda=0$, namely,
\begin{equation}
\lambda^{2}+(p-1)\mu^{2}<2.\label{eq:6-2}
\end{equation}
\end{lem}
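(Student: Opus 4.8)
The plan is to reduce Condition~\ref{cond:case-d1} to an explicit minimization over the free symmetric matrix $\Lambda$ and to show that $\Lambda=0$ is always optimal. First I would insert the data of system~\eqref{eq:exam}. Since $d=1$ the diffusion matrix is $A=[a_{\alpha\beta}]=I$ and the noise matrix is
\[
B=\begin{pmatrix}\lambda&-\mu\\\mu&\lambda\end{pmatrix},
\]
so a direct computation yields $A+A'=2I$, $B'B=(\lambda^2+\mu^2)I$ and $T_B=(B-B')/2=\bigl(\begin{smallmatrix}0&-\mu\\\mu&0\end{smallmatrix}\bigr)$. Hence \eqref{eq:d1} becomes
\[
(2-\lambda^2-\mu^2)\,I-(p-2)(T_B+\Lambda)'(T_B+\Lambda)\gg0 ,
\]
and setting $\Lambda=0$ this reads $(2-\lambda^2-(p-1)\mu^2)I\gg0$, i.e.\ precisely \eqref{eq:6-2}. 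This already gives the trivial implication: if \eqref{eq:6-2} holds then $\Lambda=0$ verifies Condition~\ref{cond:case-d1}.

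For the converse I would argue that $\Lambda=0$ minimizes the largest eigenvalue of $(T_B+\Lambda)'(T_B+\Lambda)$ among all symmetric $\Lambda$. Writing $S=T_B+\Lambda$, its skew part is pinned to $T_B$ while its symmetric part is the free $\Lambda$, so $S$ runs over all matrices $\bigl(\begin{smallmatrix}a&s-\mu\\s+\mu&d\end{smallmatrix}\bigr)$ with $a,s,d\in\R$. Assuming $p>2$, the displayed condition is equivalent to $\|S\|^2<(2-\lambda^2-\mu^2)/(p-2)$, where $\|S\|$ denotes the operator norm (the largest singular value). Thus Condition~\ref{cond:case-d1} holds for \emph{some} $\Lambda$ if and only if $\inf_{\Lambda}\|S\|^2<(2-\lambda^2-\mu^2)/(p-2)$, and the whole lemma reduces to computing this infimum.

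The key inequality is
\[
\|S\|^2\ge\tfrac12\|S\|_F^2=\tfrac12\bigl(a^2+d^2+2s^2+2\mu^2\bigr)\ge\mu^2 ,
\]
where the first step uses that the larger of the two singular values dominates their mean. Equality forces $a=d=s=0$, that is $\Lambda=0$, and there $S=T_B$ has both singular values equal to $|\mu|$, so $\|T_B\|^2=\mu^2$; hence the infimum is attained and equals $\mu^2$. Consequently the condition is achievable exactly when $(p-2)\mu^2<2-\lambda^2-\mu^2$, i.e.\ \eqref{eq:6-2}, which finishes the nontrivial implication. The borderline case $p=2$ is immediate, since the correction term vanishes and both Condition~\ref{cond:case-d1} and \eqref{eq:6-2} collapse to $\lambda^2+\mu^2<2$.

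The one genuinely non-routine point is identifying the operator norm (rather than the Frobenius norm or the determinant) as the quantity governing positive definiteness, and then noticing that $T_B$ has a \emph{repeated} singular value: this is exactly what makes the Frobenius lower bound tight and $\Lambda=0$ simultaneously optimal. I expect no further obstacle.
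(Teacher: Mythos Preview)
Your proof is correct and follows essentially the same route as the paper: both reduce the question to showing that the largest eigenvalue of $(T_B+\Lambda)'(T_B+\Lambda)$ is minimized at $\Lambda=0$ with value $\mu^2$, via the trace/Frobenius bound $\lambda_{\max}\ge\tfrac12\operatorname{tr}(S'S)\ge\mu^2$. The paper carries this out by writing the eigenvalue formula explicitly, while you phrase it in operator-norm language, but the content is the same.
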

\begin{proof}
By orthogonal transform, $A+A'-B'B-(p-2)(T_{B}+\Lambda)'(T_{B}+\Lambda)$
is positive definite if and only if 
\begin{equation}
2-(\lambda^{2}+\mu^{2})-(p-2)\lambda_{\max}>0,\label{eq:6-3}
\end{equation}
where $\lambda_{\max}$ is the larger eigenvalue of $(T_{B}+\Lambda)'(T_{B}+\Lambda)$.
For $\Lambda=\mu\begin{bmatrix}a & c\\
c & b
\end{bmatrix}$, we have 
\[
(T_{B}+\Lambda)'(T_{B}+\Lambda)=\mu^{2}\begin{bmatrix}a^{2}+(c-1)^{2} & ac+bc+a-b\\
ac+bc+a-b & b^{2}+(c+1)^{2}
\end{bmatrix}
\]
whose larger eigenvalue is
\[
\lambda_{\max}=\frac{\mu^{2}}{2}(a^{2}+b^{2}+2c^{2}+2)+\frac{\mu^{2}}{2}\sqrt{(a^{2}-b^{2}-4c)^{2}+4(ac+bc+a-b)^{2}}.
\]
Obviously, $\lambda_{\max}\ge\mu^{2}$. 

Once \eqref{eq:6-3} holds for some $\Lambda$, we get \eqref{eq:6-2}, namely \eqref{eq:d1} holds for  $\Lambda=0$. 
 Now we prove the {\it only if} part. The proof of {\it if} part is trivial.
\end{proof}
Therefore, if \eqref{eq:6-2} is satisfied, then $\sup_{x\in\mathbf{T}}\E\|\bm{u}(x,t)\|^{p}<\infty$
for any $t\ge0$; if it is not, even some weaker norm of $\bm{u}(\cdot,t)$
is infinite for large $t$ as showed in the following lemma.
\begin{lem}
Let $p>2$ and $\lambda^{2}+\mu^{2}<2$. If $\eps:=\lambda^{2}+(p-1)\mu^{2}-2>0$,
then 
\[
\E\|\bm{u}(\cdot,t)\|_{L^{2}(\mathbf{T})}^{p}=\infty
\]
for any $t>2/\eps$.
\end{lem}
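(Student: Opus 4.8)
The plan is to reuse the complexification from Example~\ref{exa:counterexam}. Setting $v=u^{(1)}+\sqrt{-1}\,u^{(2)}$ and writing $c=\lambda+\sqrt{-1}\,\mu$, a direct computation of the diffusion coefficient shows that system~\eqref{eq:exam} collapses to the single complex-valued equation $\md v=v_{xx}\vd t+c\,v_{x}\vd\BM_{t}$ with initial datum $v(x,0)=\sum_{n}\me^{-n^{2}}\me^{\sqrt{-1}nx}$; indeed $(\lambda u^{(1)}_{x}-\mu u^{(2)}_{x})+\sqrt{-1}(\mu u^{(1)}_{x}+\lambda u^{(2)}_{x})=c(u^{(1)}_{x}+\sqrt{-1}u^{(2)}_{x})$. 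Since $u^{(1)},u^{(2)}$ are real, one has $|v|^{2}=|\bm{u}|^{2}$ pointwise, hence $\|\bm{u}(\cdot,t)\|_{L^{2}(\mathbf{T})}^{2}=\|v(\cdot,t)\|_{L^{2}(\mathbf{T})}^{2}$, so it suffices to show that $\E\|v(\cdot,t)\|_{L^{2}(\mathbf{T})}^{p}=\infty$.

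First I would diagonalise in the spatial Fourier basis on $\mathbf{T}$. Expanding $v(x,t)=\sum_{n\in\mathbf{Z}}\hat{v}_{n}(t)\me^{\sqrt{-1}nx}$ and projecting the equation onto each mode, the coefficient $\hat{v}_{n}$ solves the scalar linear It\^{o} SDE $\md\hat{v}_{n}=-n^{2}\hat{v}_{n}\vd t+\sqrt{-1}\,cn\,\hat{v}_{n}\vd\BM_{t}$ with $\hat{v}_{n}(0)=\me^{-n^{2}}$. This is of geometric type, so its solution is explicit: $\hat{v}_{n}(t)=\me^{-n^{2}}\exp\{(-n^{2}+\tfrac12 c^{2}n^{2})t+\sqrt{-1}\,cn\,\BM_{t}\}$, where the Itô correction supplies the $+\tfrac12 c^{2}n^{2}t$ term because $(\sqrt{-1}cn)^{2}=-c^{2}n^{2}$. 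Next I would extract the modulus using $\mathrm{Re}(c^{2})=\lambda^{2}-\mu^{2}$ and $\mathrm{Re}(\sqrt{-1}\,c)=-\mu$, obtaining $|\hat{v}_{n}(t)|^{2}=\me^{-2n^{2}}\exp\{(-2+\lambda^{2}-\mu^{2})n^{2}t-2\mu n\BM_{t}\}$.

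The heart of the matter is then a single log-normal moment computation. Since $\BM_{t}\sim N(0,t)$, taking the expectation of the $(p/2)$-power kills the martingale factor through $\E\,\me^{-p\mu n\BM_{t}}=\me^{\frac12 p^{2}\mu^{2}n^{2}t}$, and collecting $-2+\lambda^{2}-\mu^{2}+p\mu^{2}=\eps$ in the exponent yields
\[
\E|\hat{v}_{n}(t)|^{p}=\exp\Big\{pn^{2}\Big(\tfrac{\eps t}{2}-1\Big)\Big\},
\qquad \eps=\lambda^{2}+(p-1)\mu^{2}-2.
\]
Finally, since all Fourier terms are nonnegative and $p/2\ge1$, Parseval's identity gives the term-by-term lower bound $\|v(\cdot,t)\|_{L^{2}(\mathbf{T})}^{p}\ge(2\pi)^{p/2}|\hat{v}_{n}(t)|^{p}$ for every fixed $n$, whence $\E\|v(\cdot,t)\|_{L^{2}(\mathbf{T})}^{p}\ge(2\pi)^{p/2}\E|\hat{v}_{n}(t)|^{p}$. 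When $\eps>0$ and $t>2/\eps$ the exponent $pn^{2}(\eps t/2-1)$ is positive and tends to $+\infty$ as $n\to\infty$, so the lower bound diverges and the claim follows.

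The computation is routine; the only points requiring care are the bookkeeping of the Itô correction together with the real-part extractions, and the justification that the Fourier coefficients of the $L^{2}(\PS)$-solution guaranteed by \cite{Kim2013w} genuinely solve the scalar SDEs above, which I would obtain by testing the weak formulation against $\me^{\sqrt{-1}nx}$ and invoking uniqueness. I do not expect a serious obstacle: the phenomenon is exactly that the second moment stays finite (consistent with solvability under $\lambda^{2}+\mu^{2}<2$) while the $p$-th moment of a single high-frequency mode already explodes once the MSP threshold $\lambda^{2}+(p-1)\mu^{2}<2$ is violated.
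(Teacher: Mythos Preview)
Your argument is correct and shares the paper's opening moves --- the complexification $v=u^{(1)}+\sqrt{-1}u^{(2)}$, the Fourier diagonalisation, and the explicit geometric-SDE solution for each mode --- but the endgame is genuinely different. The paper keeps the full Parseval sum $\|v(\cdot,t)\|_{L^{2}}^{2}=2\pi\sum_{n}|v_{n}(t)|^{2}$, completes the square in $n$ to obtain a pathwise lower bound of the form $2\pi\exp\{-f(t)+\mu^{2}|\BM_{t}|^{2}/f(t)\}$, and only then raises to the $p/2$-power and takes expectation, reducing the question to the divergence of a Gaussian integral $\int_{\R}\exp\{-\tfrac{y^{2}}{2}[1-p\mu^{2}/(2+\mu^{2}-\lambda^{2}+2t^{-1})]\}\vd y$. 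You instead discard all but one mode before taking moments, compute the log-normal moment $\E|\hat{v}_{n}(t)|^{p}=\exp\{pn^{2}(\eps t/2-1)\}$ directly, and let $n\to\infty$. Both routes produce the identical threshold $t>2/\eps$; your single-mode argument is shorter and avoids the completing-the-square manoeuvre, while the paper's pathwise bound has the minor conceptual advantage of exhibiting a random variable (a function of $\BM_{t}$ alone) whose $p$-th moment already diverges.
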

\begin{proof}
Denote $v=u^{(1)}+\sqrt{-1}u^{(2)}$ that can be verified to satisfy
\[
\md v=v_{xx}\vd t+(\lambda+\sqrt{-1}\mu)v_{x}\vd w_{t}
\]
with the initial condition $v(x,0)=\sum_{n\in\mathbf{Z}}\me^{-n^{2}}\me^{\sqrt{-1}nx}$
for $x\in\mathbf{T}$. By Fourier analysis, we can express
\[
v(x,t)=\sum\nolimits_{n\in\mathbf{Z}}v_{n}(t)\me^{\sqrt{-1}nx},
\]
where $v_{n}(\cdot)$ satisfies the following SDE:
\[
\md v_{n}=v_{n}[-n^{2}\vd t+(-\mu+\sqrt{-1}\lambda)n\vd w_{t}],\quad v_{n}(0)=\me^{-n^{2}}.
\]
From the theory of SDEs, we have
\[
v_{n}(t)=\me^{-\frac{1}{2}f(t)n^{2}-\mu nw_{t}}\cdot\me^{\sqrt{-1}(\lambda\mu n^{2}t+\lambda nw_{t})},
\]
where $f(t):=2+(2+\mu^{2}-\lambda^{2})t$. So we derive
\begin{align*}
|v_{n}(t)|^{2} & =\exp\!\left\{ -f(t)n^{2}-2\mu nw_{t}\right\} \\
 & =\exp\biggl\{-f(t)\left(n+\frac{\mu w_{t}}{f(t)}\right)^{\!2}+\frac{\mu^{2}|w_{t}|^{2}}{f(t)}\biggr\},
\end{align*}
and by Parseval's identity, 
\begin{align*}
\|v(\cdot,t)\|_{L^{2}(\mathbf{T})}^{2} & =2\pi\sum_{n\in\mathbf{Z}}|v_{n}(t)|^{2}\\
 & =2\pi\sum_{n\in\mathbf{Z}}\exp\biggl\{-f(t)\left(n+\frac{\mu w_{t}}{f(t)}\right)^{\!2}+\frac{\mu^{2}|w_{t}|^{2}}{f(t)}\biggr\}\\
 & \ge2\pi\exp\biggl\{-f(t)+\frac{\mu^{2}|w_{t}|^{2}}{f(t)}\biggr\}.
\end{align*}
Thus, we have
\begin{align*}
\E\|\bm{u}(\cdot,t)\|_{L^{2}(\mathbf{T})}^{p} & =\E\|v(\cdot,t)\|_{L^{2}(\mathbf{T})}^{p}\\
 & \ge(2\pi)^p\,\E\exp\biggl\{-\frac{pf(t)}{2}+\frac{p\mu^{2}|w_{t}|^{2}}{2f(t)}\biggr\}\\
 & =(2\pi)^p\me^{-pf(t)/2}\,\E\exp\biggl\{\frac{p\mu^{2}|w_{1}|^{2}}{2f(t)/t}\biggr\}\\
 & =(2\pi)^p\me^{-pf(t)/2}\,\E\exp\biggl\{\frac{p\mu^{2}|w_{1}|^{2}}{2[2+\mu^{2}-\lambda^{2}+2t^{-1}]}\biggr\}\\
 & =(2\pi)^{p-1/2}\me^{-pf(t)/2}\int_{\R}\exp\biggl\{-\frac{y^{2}}{2}\biggl[1-\frac{p\mu^{2}}{2+\mu^{2}-\lambda^{2}+2t^{-1}}\biggr]\biggr\}\md y.
\end{align*}
The last integral diverges if
\[
1-\frac{p\mu^{2}}{2+\mu^{2}-\lambda^{2}+2t^{-1}}<0.
\]
This immediately concludes the lemma.
\end{proof}
\end{example}

Indeed, some specific choices of $\Lambda$ in Condition~\ref{eq:d1}
like $\Lambda=0$ usually lead to a class of convenient and even optimal
criteria in applications. For instance, the above discussion shows
how the skew-symmetric component of $B$ substantially affects the
$L^{p}$-norm of the solution of system \eqref{eq:exam}. But in general,
the choice of $\Lambda$ still heavily depends on the structure of
the concrete problem. 

\begin{example}\label{lem:last}
Let $p\ge3$ and $\lambda>\mu>0$. 
Consider 
\[
A=\begin{bmatrix} 1+\lambda^2 & 0\\
0 & 1+\mu^2
\end{bmatrix}
\quad\text{and}\quad 
B=\begin{bmatrix}0 & -\mu\\
\lambda & 0
\end{bmatrix}.
\]
For the sake of simplicity, we restrict the choice of $\Lambda$
in the form $\begin{bmatrix}0 & c\\
c & 0
\end{bmatrix}$. Then we have
\begin{align*}
 & A+A'-B'B-(p-2)(T_{B}+\Lambda)'(T_{B}+\Lambda)\\
 & ={\rm diag}\Bigl\{2+\lambda^{2}-(p-2)\Big(c+\frac{\lambda+\mu}{2}\Big)^{\!2},\ 2+\mu^{2}-(p-2)\Big(c-\frac{\lambda+\mu}{2}\Big)^{\!2}\Bigr\}\\
 & =:{\rm diag}\{g(c),\,h(c)\}.
\end{align*}
As $p\ge3$ and $\lambda>\mu>0$, it is easily to check that 
\[
\max_{c\in\R}\big\{ g(c)\wedge h(c)\big\}
= 2 + {\lambda^2 + \mu^2 \over 2 }-\frac{(p-2)(\lambda+\mu)^{2}}{4}-\frac{(\lambda-\mu)^{2}}{4(p-2)},
\]
where the maximum is attained when $g(c)=h(c)$, i.e.,
\[
c=\frac{\lambda - \mu}{2(p-2)}.
\]
So one can easily assign some specific values to $p$, $\lambda$ and $\mu$ to let $A$ and $B$ satisfy Condition~\ref{eq:d1} but not with  $\Lambda=0$, for example, $(p,\lambda,\mu)=(3,3,1)$. 
This shows that the choice $\Lambda=0$ does not always lead to the minimal requirements.
\end{example}

\bibliographystyle{imsart-nameyear}
\bibliography{spds}

\end{document}